\def\ispreprint{1}        
\pgfplotsset{compat=1.18}
\newcommand{\mylinewidth}{1pt}
\colorlet{color_ref}{Black}
\colorlet{color_rn}{ForestGreen}
\colorlet{color_sr3}{Red}
\colorlet{color_sr6}{RoyalBlue}
\colorlet{color_sr7}{BurntOrange}
\colorlet{color_sr8}{Plum}
\colorlet{color_sr10}{Brown}
\colorlet{color_sr12}{Violet}
\colorlet{color_sr15}{BlueGreen}
\def\markerref{p}
\def\markerrn{o}
\def\markersriii{pentagon}
\def\markersrvi{asterisk}
\def\markersrvii{square}
\def\markersrviii{diamond}
\def\markersrx{triangle}
\def\markersrxii{asterisk}
\def\markersrxv{asterisk}
\crefname{hypothesis}{Hypothesis}{Hypotheses}
\theoremstyle{plain}
\newtheorem{lemma}{Lemma}
\newtheorem{proposition}{Proposition}
\newtheorem{theorem}{Theorem}
\theoremstyle{definition}
\newtheorem{definition}{Definition}
\theoremstyle{remark}
\newtheorem{remark}{Remark}
\DeclareMathOperator{\fl}{fl}
\DeclareMathOperator{\op}{op}
\DeclareMathOperator{\cov}{Cov}
\newcolumntype{R}[1]{>{\raggedleft\arraybackslash }b{#1}}
\newcolumntype{L}[1]{>{\raggedright\arraybackslash }b{#1}}
\newcolumntype{C}[1]{>{\centering\arraybackslash }b{#1}}
\newcommand{\abs}[1]{\left\lvert#1\right\rvert}
\newcommand{\norm}[1]{\left\lVert#1\right\rVert}
\newcommand{\SR}{\text{SR}}
\newcommand{\thetitle}{Probabilistic error analysis of\\%
limited-precision stochastic rounding}
\newcommand{\theshorttitle}{Limited-Precision Stochastic Rounding}
\newcommand{\thetitlefootnote}{Version of \today.}
\newcommand{\thefunding}{This work was funded by the HOLIGRAIL project (ANR-23-PEIA-0010).}
\newcommand{\theabstract}{Classical probabilistic rounding error analysis is particularly well suited to stochastic rounding (SR), and it yields strong results when dealing with floating-point algorithms that rely heavily on summation. For many numerical linear algebra algorithms, one can prove probabilistic error bounds that grow as $\mathcal{O}(\sqrt{n}u)$, where $n$ is the problem size and $u$ is the unit roundoff. These probabilistic bounds are asymptotically tighter than the worst-case ones, which grow as $\mathcal{O}(nu)$. For certain classes of algorithms, SR has been shown to be unbiased. However, all these results were derived under the assumption that SR is implemented exactly, which typically requires a number of random bits that is too large to be suitable for practical implementations. We investigate the effect of the number of random bits on the probabilistic rounding error analysis of SR. To this end, we introduce a new rounding mode, limited-precision SR. By taking into account the number $r$ of random bits used, this new rounding mode matches hardware implementations accurately, unlike the ideal SR operator generally used in the literature. We show that this new rounding mode is biased and that the bias is a function of $r$. As $r$ approaches infinity, however, the bias disappears, and limited-precision SR converges to the ideal, unbiased SR operator. We develop a novel model for probabilistic error analysis of algorithms employing SR. Several numerical examples corroborate our theoretical findings.}
\newcommand{\thekeywords}{Rounding error analysis,
    stochastic rounding,
    low-precision,
    floating-point arithmetic,
    inner product
}
\newcommand{\themsccodes}{
    65G50, 
    65F30. 
}
\newcommand{\theauthori}{El-Mehdi El arar}
\newcommand{\theemaili}{el-mehdi.el-arar@inria.fr}
\newcommand{\theauthorii}{Massimiliano Fasi}
\newcommand{\theemailii}{m.fasi@leeds.ac.uk}
\newcommand{\theauthoriii}{Silviu-Ioan Filip}
\newcommand{\theemailiii}{silviu.filip@inria.fr}
\newcommand{\theauthoriv}{Mantas Mikaitis}
\newcommand{\theemailiv}{m.mikaitis@leeds.ac.uk}
\newcommand{\theaffiliationi}{%
Université de Rennes, Inria, CNRS, IRISA, F-35000 Rennes, France
}
\newcommand{\theaffiliationii}{%
School of Computing, University of Leeds, Woodhouse Lane, Leeds LS2 9JT, UK
}
\newcommand{\theshortauthors}{E. M. El arar, M. Fasi, S.-I. Filip, and M. Mikaitis}
\def\F{\ensuremath{\mathcal{F}}}
\headers{\theshorttitle}{\theshortauthors}
\date{\vskip -30pt}
\newcommand{\funding}[1]{\textbf{Funding:} #1}
\newcommand{\email}[1]{\href{#1}{#1}}
\renewcommand{\textcolor}[2]{#2}
\renewcommand{\color}[1]{}
\begin{document}

\title{\thetitle\thanks{\thetitlefootnote \funding{\thefunding}}}

\author{%
    \theauthori\thanks{\theaffiliationi (\email{\theemaili, \theemailiii}).}
    \and
    \theauthorii\thanks{\theaffiliationii (\email{\theemailii, \theemailiv}).}
    \and
    \theauthoriii\footnotemark[2]
    \and
    \theauthoriv\footnotemark[3]}

\maketitle

\ifx\ispreprint\undefined
\begin{abstract}
    \theabstract
\end{abstract}

\begin{keywords}
    \thekeywords
\end{keywords}

\begin{MSCcodes}
    \themsccodes
\end{MSCcodes}
\else
\paragraph{Abstract.} \theabstract{}

\paragraph{Keywords.} \thekeywords{}

\paragraph{MSC classification} \themsccodes{}
\fi

\section{Introduction}

A finite-precision number system cannot represent all reals, and values that cannot be represented must be approximated by a nearby machine-representable number.
This process, known as \emph{rounding}, ensures that partial results that are not representable can be used in subsequent limited-precision computations.
Rounding operators usually make a choice between two rounding candidates, one larger and one smaller than the value that has to be rounded, and the rules for making that choice are called \emph{rounding modes}.

Most rounding modes available in computer hardware are deterministic, meaning that for the same value, they will always return the same rounding candidate.
For example, round-to-nearest (RN) will always choose a machine-representable number that minimizes the distance from the input.
Here, we are concerned with a non-deterministic rounding mode: mode 1 \emph{stochastic rounding} (SR)~\cite{cfhm22}.
SR guarantees that rounding errors are random, which makes them more likely to cancel out. This rounding mode is particularly beneficial in alleviating \emph{stagnation}, a numerical phenomenon that causes small addends to be lost to rounding in long chains of sums.

SR can be traced back to the 1950s~\cite{fors50}, but it has seen a resurgence of interest in recent years.
It has been shown to be particularly well-suited to machine learning computations~\cite{gagn15,hofa92a}, and it is starting to become available in low-precision floating-point hardware: {\color{blue} both the Tesla D1~\cite{tesl-dojo} and AMD MI300 GPUs~\cite[sect.~7.2]{amd-mi300} support SR down-conversion of \texttt{binary32} values to 8-bit floating-point representations, whereas the Graphcore Intelligence Processing Unit (IPU)~\cite[sect.~7]{graph-mpai-320} offers the option to stochastically round (intermediate) \texttt{binary32} accumulation results in their Colossus IPU dot product operations to \texttt{binary16}. Starting with the Blackwell architecture, NVIDIA GPUs support SR down-conversion from \texttt{binary32} to 16, 8, 6 and 4-bit values~\cite[sect.~9.7.9.21]{nvidia-ptx8.7_isa}. The NeuronCore-v2 architecture powering AWS Trainium chips~\cite{fzg24} offers the possibility of using either RN or SR when doing \texttt{binary16} or \texttt{bfloat16} arithmetic (at least additions) by setting a global rounding flag.}


Indeed, artificial intelligence has motivated much of the recent research on SR, as this rounding mode has the potential to provide higher accuracy in various deep learning and optimization applications. When training neural networks with deterministic rounding modes, for example, the accumulation of gradients during parameter updates may be prone to stagnation, as the gradients become smaller in magnitude as training progresses. Gupta et al.~\cite{gagn15} have shown that SR can alleviate this issue, and this has been further explored in subsequent work~\cite{wcbc18,zzad20}.

SR has found some important uses in applications outside machine learning. It can alleviate stagnation in the solution of partial differential equations (PDEs), via Runge--Kutta finite difference methods in low precision~\cite{crgi22}, and of ordinary differential equations (ODEs), via the Euler, midpoint, and Heun methods~\cite{fami21}.
Paxton et al.~\cite{pcks22} have studied the effectiveness of low-precision arithmetic for climate simulations, focusing in particular on the effects of RN and SR in Lorenz systems, chaotic systems of ODEs, and PDEs related to climate modeling.
They have found that SR can effectively mitigate rounding errors across various applications, and these results provide evidence that SR may be relevant to next-generation climate models. For a comprehensive overview of the uses of SR, we refer the reader to~\cite[sect.~8]{cfhm22}.


Theoretical results have confirmed the beneficial effects of SR observed in numerical applications. Assuming precision-$p$ arithmetic, Parker~\cite{park97} has shown that SR is unbiased. Connolly et al.~\cite{chm21} have proven that SR satisfies the mean independence property, whereby one can construct martingales. Using the Azuma--Hoeffding inequality, they introduce a model that leads to probabilistic bounds on the relative error of a large class of linear algebra algorithms. Consider, for example, the evaluation of the inner product between two length-$n$ vectors in precision-$p$ floating-point arithmetic. The deterministic worst-case bound on the roundoff error that can occur during this computation grows as $nu$, where $u=2^{-p}$. Connolly et al.~\cite{chm21} have obtained a bound in $\mathcal{O}(\sqrt{n \ln{(n)}}u)$, and by using an alternative construction for the martingale, Ipsen and Zhou~\cite{ipzh20} have been able to derive a tighter one in $\mathcal{O}(\sqrt{n}u)$. This construction has been extended to the Horner method for polynomial evaluation~\cite{esop22}. A more recent approach relies exclusively on the variance of the error and the Bienaymé--Chebyshev inequality~\cite{esop23a}. The advantages of each approach are discussed in~\cite[sect.~4.3]{thesisarar}.

The rounding errors produced by $\SR$ are not always unbiased; examples of this are two algorithms for computing the sample variance~\cite{esop23}. Nevertheless, by using the aforementioned alternative martingale construction~\cite{esop23a,ipzh20},  one can derive probabilistic error bounds in $\mathcal{O}(\sqrt{n}u)$ for these two algorithms. Dexter et al.~\cite{dbmi24} show that, when rounding tall-and-thin matrices to low precision, if SR is used, then the smallest singular value of the rounded matrix is bounded away from zero with high probability.

SR implementations, both in hardware and in software, tend to use a summation-based algorithm~\cite[sect.~7]{cfhm22}. In a precision-$p$ floating-point number system (see~\Cref{sec:back} for details), this amounts to adding randomly generated bits to the trailing bits of the significand to be rounded. Ideally, these random bits should cover all the trailing bits to be rounded off, but this may be unfeasible, as the exact significand may have a non-terminating binary expansion. Therefore, the number $r$ of random bits to use must be limited in practice. This introduces a certain bias in the behavior of the implementations, with too small values of $r$ negating the expected benefits derived from using SR.

Ali,~Filip,~and~Sentieys~\cite{afs24} study this empirically in the context of deep neural network training, where they use SR for inner product computations in matrix multiplication. In the context of ODE solvers, experiments varying $r$ were performed for fixed-point arithmetic, where SR was used in the multiplication of fixed-point numbers~\cite[sect.~5c]{hmlf20}.

As far as we are aware, however, theoretical work on SR does not put constraints on $r$, and we are the first to consider the effects of limited precision from a theoretical point of view. To fully understand the behavior of practical implementations of SR, however, it is imperative to understand the impact of $r$ from an error analysis perspective. Therefore, we extend the analysis of standard algorithms under SR by taking into account the number of random bits used to perform the rounding. We call this new rounding mode \emph{limited-precision stochastic rounding}, and we denote it by $\SR_{p,r}$. In this notation, $p-1$ denotes the number of fraction bits available in the format to which the operator rounds, and similarly, we denote by $\SR_p$ the classical SR operator, for which $r = \infty$. We take the unit roundoff for precision-$p$ arithmetic to be $u_p:=2^{1-p}$, in line with previous work~\cite{esop23a}. Our main contributions are outlined below.
\begin{itemize}
    \item In \Cref{sec:limited-precision-sr}, we show that the unbiased and mean independence properties of $\SR_p$ break down for $\SR_{p,r}$.
    \item In \Cref{sec:limited-precision-sr,sec:applications}, we present a model whereby we can analyze algorithms under $\SR_{p,r}$ and compute probabilistic error bounds \textcolor{blue}{proportional to $\sqrt{n}u_p + nu_{p+r}$}, where $p$ is the working precision, $r$ the number of random bits, and $u_p=2^{1-p}$. 
    \item In \Cref{sec:applications}, we use our model to analyze the recursive summation and inner product algorithms.
    \item In \Cref{sec:bound-analysis}, we suggest a theoretically sound rule of thumb for choosing $r$ \textcolor{blue}{in recursive summation and inner product computations}:  setting $r=\lceil (\log_2n) / 2\rceil$ offers, with high probability, a good tradeoff between accuracy and operator complexity.
    \item In \Cref{sec:experiments}, we showcase the value of our analysis by means of numerical experiments focused on varying $r$.
\end{itemize}


\section{Notation and definitions}
\label{sec:back}
We recall basic definitions and properties from probability theory~\cite{miup05} and floating-point arithmetic~\cite{high02,mbdj18}.
We also formally introduce SR and its limited-precision variant.

\subsection{Probability}

\newcommand{\EE}{\ensuremath{\mathbb{E}}}
\newcommand{\VV}{\ensuremath{\mathbb{V}}}
Let $X$ be a random variable. Throughout this paper, $\EE(X)$ denotes the expected value of $X$ and $\VV(X)$ denotes its variance.
The conditional expectation of $X$ given $Y$ is denoted by $\EE(X \mid Y)$.

The following result provides a bound on the probability of a random variable falling within a specific number of standard deviations from its mean.

\begin{lemma}[Bienaymé--Chebyshev inequality]
\label{lem:bien-cheb-ineq}
Let $X$ be a random variable. If $\EE(X)$ and $\VV(X)$ are finite, then for any real number $\alpha > 0$, one has
$$ \mathbb{P}\big(\lvert X - \EE(X) \rvert \leq \alpha \sqrt{\VV(X)}\big) \geq 1- \frac{1}{\alpha^2}.
$$
\end{lemma}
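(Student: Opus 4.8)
The plan is to reduce the statement to Markov's inequality applied to the non-negative random variable $Y := (X - \EE(X))^2$, whose expectation is, by definition, $\EE(Y) = \VV(X)$.

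First I would record Markov's inequality: if $Y \geq 0$ has finite expectation, then $\mathbb{P}(Y \geq t) \leq \EE(Y)/t$ for every $t > 0$. If one prefers not to cite this, it follows immediately from the pointwise bound $t\cdot\mathbf{1}_{\{Y \geq t\}} \leq Y$ by taking expectations and using monotonicity.

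Next, assume first that $\VV(X) > 0$ and set $t = \alpha^2\,\VV(X) > 0$. Since the events $\{\lvert X - \EE(X)\rvert > \alpha\sqrt{\VV(X)}\}$ and $\{Y > \alpha^2\,\VV(X)\}$ coincide, Markov's inequality yields
$$
\mathbb{P}\big(\lvert X - \EE(X)\rvert > \alpha\sqrt{\VV(X)}\big) = \mathbb{P}\big(Y > \alpha^2\,\VV(X)\big) \leq \frac{\EE(Y)}{\alpha^2\,\VV(X)} = \frac{1}{\alpha^2}.
$$
Passing to the complementary event, which is precisely $\{\lvert X - \EE(X)\rvert \leq \alpha\sqrt{\VV(X)}\}$, gives the claimed bound of $1 - 1/\alpha^2$.

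Finally, the degenerate case $\VV(X) = 0$ is handled separately: then $Y \geq 0$ has zero expectation, so $Y = 0$ almost surely, hence $X = \EE(X)$ with probability one and the event in question has probability $1 \geq 1 - 1/\alpha^2$. There is no genuine obstacle here, as this is a classical result; the only two points that require a little care are the strict-versus-nonstrict inequalities when taking complements (the event involving ``$\leq$'' is the complement of the event involving ``$>$'') and isolating the zero-variance case so that dividing by $\VV(X)$ is legitimate.
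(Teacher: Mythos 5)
Your proof is correct: the reduction to Markov's inequality applied to $Y=(X-\EE(X))^2$, the passage to the complementary event with the correct handling of strict versus non-strict inequalities, and the separate treatment of the degenerate case $\VV(X)=0$ are all sound. The paper itself states this lemma as a classical result without giving any proof, so there is nothing to compare against; your argument is the standard textbook derivation and would serve as a complete proof if one were required.
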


To analyze how rounding errors accumulate, we will model them as random variables, and we will use probability to make precise statements about how rounding errors interact.

The linear relationship between two random variables $X$ and $Y$ is captured by their \emph{covariance}
\begin{equation*}
    \cov(X, Y) = \EE(XY) - \EE(X)\EE(Y).
\end{equation*}

If $\cov(X, Y) = 0$, then the two random variables are \emph{uncorrelated}, which means that there is no linear dependence between them.
The lack of linear correlation is not sufficient in our analysis, and we need to rely on the following property instead.

\begin{definition}[Mean independence]
	The random variable $X$ is \emph{mean independent} of the random variable $Y$ if $\EE(X \mid Y)=\EE(X)$. The  sequence of random variables $X_0, X_1,\ldots$ is \emph{mean independent} if $\EE(X_k \mid X_0, X_1, \ldots, X_{k-1}) = \EE(X_k)$ for all $k$.
\end{definition}

As the following result demonstrates, being mean independent is a stronger property than being uncorrelated but weaker than being truly independent.

\begin{proposition}
	Let $X$ and $Y$ be random variables.
	\begin{enumerate}
		\item If $X$ and $Y$ are independent, then $X$ is mean independent of $Y$.
		\item If $X$ is mean independent of $Y$, then $X$ and $Y$ are uncorrelated.
	\end{enumerate}
	The reciprocals of these two implications are false.
\end{proposition}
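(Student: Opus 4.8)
The plan is to prove the two implications directly from the definitions and then exhibit a single small discrete example that simultaneously refutes both converses. For the first implication, I would argue that independence of $X$ and $Y$ means the conditional distribution of $X$ given $Y$ equals the unconditional distribution of $X$, so integrating against it gives $\EE(X\mid Y)=\EE(X)$ almost surely, which is exactly mean independence. Equivalently, one can verify the defining property of conditional expectation: $\EE(X)$ is $\sigma(Y)$-measurable, and for every bounded $\sigma(Y)$-measurable $g(Y)$ independence yields $\EE\bigl(X\,g(Y)\bigr)=\EE(X)\,\EE\bigl(g(Y)\bigr)=\EE\bigl(\EE(X)\,g(Y)\bigr)$, so $\EE(X\mid Y)=\EE(X)$.

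For the second implication, I would use the tower property. Assuming $X$ is mean independent of $Y$ and that the relevant moments are finite,
\[
\EE(XY)=\EE\bigl(\EE(XY\mid Y)\bigr)=\EE\bigl(Y\,\EE(X\mid Y)\bigr)=\EE\bigl(Y\,\EE(X)\bigr)=\EE(X)\,\EE(Y),
\]
hence $\cov(X,Y)=\EE(XY)-\EE(X)\,\EE(Y)=0$, i.e.\ $X$ and $Y$ are uncorrelated.

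To show both converses fail, I would take $Y$ uniformly distributed on $\{-1,0,1\}$ and set $X=Y^2$. Since $\EE(Y)=\EE(Y^3)=0$, we get $\EE(XY)=\EE(Y^3)=0=\EE(X)\,\EE(Y)$, so $X$ and $Y$ are uncorrelated; yet $\EE(X\mid Y)=Y^2$ is non-constant while $\EE(X)=2/3$, so $X$ is \emph{not} mean independent of $Y$, which refutes the converse of the second implication. On the other hand, conditional on $X=1$ the value of $Y$ is $\pm 1$ with equal probability and conditional on $X=0$ it is $0$, so $\EE(Y\mid X)=0=\EE(Y)$; thus $Y$ is mean independent of $X$, but $X$ is a deterministic function of $Y$, so $X$ and $Y$ are not independent, which refutes the converse of the first implication.

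This proof is essentially routine, and there is no real obstacle; the only points that need a little care are the measure-theoretic justification that independence makes the conditional distribution coincide with the marginal (so that the first implication is invoked correctly), and the bookkeeping that the chosen discrete example really does have the claimed conditional expectations in both directions.
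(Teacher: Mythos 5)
The paper states this proposition without proof, treating it as a standard fact from probability theory, so there is nothing to compare your argument against line by line. Your proof is correct and complete: the first implication via the defining property of conditional expectation, the second via the tower property $\EE(XY)=\EE\bigl(Y\,\EE(X\mid Y)\bigr)$, and the single example $Y$ uniform on $\{-1,0,1\}$ with $X=Y^2$ cleanly refutes both converses at once. One small point of hygiene: your closing step ``$X$ is a deterministic function of $Y$, so $X$ and $Y$ are not independent'' needs $X$ to be non-degenerate (a constant function of $Y$ \emph{is} independent of $Y$); here $X=Y^2$ is visibly non-constant, e.g.\ $\mathbb{P}(X=0\mid Y=0)=1\neq \mathbb{P}(X=0)=1/3$, so the conclusion stands.
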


\textcolor{blue}{When $X$ is fully determined by $Y$, the conditional expectation~\cite[sect.~34]{billingsley1995probability} satisfies the following property.}
\begin{proposition}
\textcolor{blue}{Let $X$ and $Y$ be random variables defined on the same probability space. If $X$ is entirely determined by $Y$, meaning there exists a measurable function $g$ such that $X = g(Y)$, then the conditional expectation of $X$ given $Y$ satisfies $\mathbb{E}(X \mid Y) = X$.
}
\end{proposition}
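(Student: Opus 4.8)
The plan is to verify that $X$ itself satisfies the two defining properties of the conditional expectation $\mathbb{E}(X \mid Y)$, and then invoke its almost-sure uniqueness. Recall Kolmogorov's definition: writing $\sigma(Y)$ for the $\sigma$-algebra generated by $Y$, the conditional expectation $\mathbb{E}(X \mid Y)$ is the (almost surely unique) $\sigma(Y)$-measurable, integrable random variable $Z$ such that $\mathbb{E}(Z \mathbf{1}_A) = \mathbb{E}(X \mathbf{1}_A)$ for every $A \in \sigma(Y)$; here we assume, as is implicit, that $X$ is integrable so that this object is well defined.

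First I would check measurability. Since $g$ is measurable and $X = g(Y)$, the random variable $X$ is a measurable function of $Y$, hence $\sigma(Y)$-measurable: for every Borel set $B$ we have $\{X \in B\} = \{Y \in g^{-1}(B)\} \in \sigma(Y)$. Thus $X$ is a legitimate candidate for $\mathbb{E}(X \mid Y)$.

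Next, the integral identity to be verified reads $\mathbb{E}(X \mathbf{1}_A) = \mathbb{E}(X \mathbf{1}_A)$ for all $A \in \sigma(Y)$, which holds trivially. Hence $X$ satisfies both conditions in the definition, and by the almost-sure uniqueness of the conditional expectation we conclude $\mathbb{E}(X \mid Y) = X$ almost surely. Equivalently, one may invoke the ``taking out what is known'' rule $\mathbb{E}(XW \mid Y) = X\,\mathbb{E}(W \mid Y)$, valid whenever $X$ is $\sigma(Y)$-measurable, applied with $W \equiv 1$, to obtain $\mathbb{E}(X \mid Y) = X\,\mathbb{E}(1 \mid Y) = X$.

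I do not anticipate any genuine obstacle here: the statement is a textbook property of conditional expectation, and the only points that merit a word of care are the (implicit) integrability hypothesis on $X$ needed for $\mathbb{E}(X \mid Y)$ to be defined and the fact that, as with every conditional expectation, the identity is understood to hold up to a set of probability zero.
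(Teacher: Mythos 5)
Your proof is correct: the paper offers no proof of this proposition, only a citation to Billingsley (sect.~34), and your argument---checking that $X$ is $\sigma(Y)$-measurable because $X=g(Y)$ with $g$ measurable, noting the defining integral identity holds trivially, and invoking almost-sure uniqueness---is precisely the standard argument from that reference. Your remarks about the implicit integrability hypothesis and the almost-sure qualification are appropriate and do not signal any gap.
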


\begin{definition}[{\cite[p.~295]{miup05}}]
	\label{def:martingale}
	A sequence of random variables $M_0,\ldots, M_n$ is a \emph{martingale} with respect to the sequence $X_0,\ldots, X_n$ if, for all $k,$
	\begin{itemize}
		\item $M_k$ is a function of $X_0,\ldots, X_k$,
		\item $\EE(\abs{M_k})$ is finite, and
		\item $\EE(M_k \mid  X_0,\ldots, X_{k-1})=M_{k-1}$.
	\end{itemize}
\end{definition}

\begin{lemma}[Azuma--Hoeffding inequality, {\cite[p.~303]{miup05}}]
\label{lem:azuma}
	Let $M_0,\ldots, M_n$ be a martingale with respect to a sequence $X_0,\ldots, X_n.$ We assume that there exist $0<a_k$ such that $-a_k \leq M_k - M_{k-1} \leq  a_k$ for $k = 1,\ldots,n.$ Then,  we have
	$$ \mathbb{P}\left( \lvert M_n - M_0 \rvert \leq \sqrt{\sum_{k=1}^n a_k^2} \sqrt{2 \ln (2 / \lambda)} \right) \geq 1- \lambda,
	$$
	where $0< \lambda <1$.
\end{lemma}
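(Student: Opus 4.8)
The plan is to prove the equivalent two-sided tail estimate $\mathbb{P}(\abs{M_n - M_0} \geq s) \leq 2\exp\!\big(-s^2 / (2\sum_{k=1}^n a_k^2)\big)$ and then to choose $s = \sqrt{\sum_{k=1}^n a_k^2}\,\sqrt{2\ln(2/\lambda)}$, for which the right-hand side equals exactly $\lambda$; taking complements yields the stated inequality. The engine of the argument is the exponential (Chernoff) method: for any $t > 0$, Markov's inequality applied to the nonnegative random variable $e^{t(M_n - M_0)}$ gives $\mathbb{P}(M_n - M_0 \geq s) \leq e^{-ts}\,\mathbb{E}\big(e^{t(M_n - M_0)}\big)$, so the whole problem reduces to bounding the moment generating function of the total increment $M_n - M_0$.

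To bound that moment generating function, I would write $M_n - M_0 = \sum_{k=1}^n D_k$ with increments $D_k := M_k - M_{k-1}$ and peel the factors off one at a time by conditioning. From \Cref{def:martingale}, $M_k$ is a function of $X_0,\ldots,X_k$ and $\mathbb{E}(M_k \mid X_0,\ldots,X_{k-1}) = M_{k-1}$, so each increment is \emph{conditionally mean zero}, that is $\mathbb{E}(D_k \mid X_0,\ldots,X_{k-1}) = 0$, while the hypothesis supplies $-a_k \leq D_k \leq a_k$. Using the tower property and pulling out the factor measurable with respect to $X_0,\ldots,X_{n-1}$, one has $\mathbb{E}(e^{t(M_n-M_0)}) = \mathbb{E}\big(e^{t(M_{n-1}-M_0)}\,\mathbb{E}(e^{tD_n}\mid X_0,\ldots,X_{n-1})\big)$, and iterating this step downward collapses the moment generating function into a product of $n$ conditional factors, one per increment.

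The crux --- and the step I expect to be the main obstacle --- is bounding each conditional factor by a Gaussian-type quantity via Hoeffding's lemma: for a random variable $Z$ with $\mathbb{E}(Z) = 0$ and $-a \leq Z \leq a$, one has $\mathbb{E}(e^{tZ}) \leq e^{t^2 a^2 / 2}$. I would establish this by convexity of the exponential on $[-a, a]$, writing $e^{tz} \leq \frac{a - z}{2a}e^{-ta} + \frac{a + z}{2a}e^{ta}$, taking the conditional expectation so that the term linear in $z$ vanishes because the increment is conditionally mean zero, and recognizing the remainder as $\cosh(ta) \leq e^{t^2 a^2 / 2}$, the last inequality following from a termwise comparison of Taylor series. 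Applying this with $a = a_k$ to every factor gives $\mathbb{E}(e^{t(M_n - M_0)}) \leq \exp\!\big(\tfrac{t^2}{2}\sum_{k=1}^n a_k^2\big)$.

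Combining with the Chernoff step yields $\mathbb{P}(M_n - M_0 \geq s) \leq \exp\!\big(-ts + \tfrac{t^2}{2}\sum_{k=1}^n a_k^2\big)$, and minimizing the exponent over $t > 0$ at $t = s / \sum_{k=1}^n a_k^2$ produces $\exp\!\big(-s^2 / (2\sum_{k=1}^n a_k^2)\big)$. Since $-M_0,\ldots,-M_n$ is also a martingale with the same increment bounds, the identical estimate holds for $M_0 - M_n$, and a union bound doubles it to give the two-sided tail bound stated at the outset. Solving $2\exp\!\big(-s^2/(2\sum_{k=1}^n a_k^2)\big) = \lambda$ for $s$ then finishes the proof.
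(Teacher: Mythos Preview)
Your argument is the standard proof of the Azuma--Hoeffding inequality and is correct as outlined: the Chernoff--Markov step, the iterated conditioning to factor the moment generating function, Hoeffding's lemma via convexity of the exponential and the bound $\cosh(ta)\le e^{t^2a^2/2}$, the optimization in $t$, and the union bound for the two-sided estimate all go through exactly as you describe.

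There is nothing to compare against, however: the paper does not prove \Cref{lem:azuma} at all. It is quoted as a known result from the cited reference and used as a black box (e.g.\ in the proof of \Cref{thm:proba-sum}). Your write-up therefore supplies a self-contained proof that the paper simply omits.
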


The mean independence property is essential to improve the error analysis of algorithms performed using stochastic rounding. It allows one to obtain a martingale (\Cref{def:martingale}), and then by applying \Cref{lem:azuma}, derive probabilistic bounds on the error that grows in $\mathcal{O}(\sqrt{n}u_p)$.

\subsection{Floating-point arithmetic}
\label{sec:FP}


Any \textcolor{blue}{nonzero} real number $x \in \mathbb{R}$ can be represented as
\begin{equation}
    x = (-1)^s \cdot 2^e \cdot m,\qquad
    s \in \{0,1\},\quad
    e \in \mathbb{Z},\quad
    m \in [1, 2).
\end{equation}
This representation, which we shall call \emph{scientific notation}, is unique for any $x \neq 0$.

Let $p > 0$. The set $\F \subset \mathbb{R}$ is a normalized precision-$p$ binary floating-point number system if it contains all real numbers $x$ that can be written as
$$x = (-1)^s \cdot 2^e \cdot m,$$
where the sign $s$ is either 0 or 1, the exponent $e$ is an integer in some range $[e_{\min}, e_{\max}]$, with $e_{\min} < e_{\max}$, and the significand $m \in [1, 2)$ is a real number with at most $p$ significant binary digits (bits). 
Customary choices of $p$ are defined in the IEEE 754 standard for floating-point arithmetic~\cite{ieee19}, the OCP 8-bit floating-point specification~\cite{modc23}, and the IEEE P3109 standard\footnote{\href{https://sagroups.ieee.org/p3109wgpublic/}{https://sagroups.ieee.org/p3109wgpublic/}} on arithmetic formats for machine learning.

For $x\in \mathbb{R}$, we denote the smallest precision-$p$ floating-point number no smaller than $x$ by $\llceil x \rrceil_p $, and the largest floating-point number no greater than $x$ by $\llfloor x \rrfloor_p$. In other words, we have
$$ \llceil x \rrceil_p=\min\{y\in \mathcal{F} : y \geq x\},  \quad \llfloor x \rrfloor_p=\max\{y\in \mathcal{F} : y \leq x\},
$$
and by definition, $\llfloor x \rrfloor_p  \leq x \leq \llceil x \rrceil_p$, with equality throughout if and only if $x \in\mathcal{F}$.
A real number $x\ne 0$ has two possible representations in floating-point arithmetic, $\llfloor x \rrfloor_p$ or $\llceil x \rrceil_p$, which coincide if $x \in \mathcal{F}$.
The operation that maps $x$ to the chosen candidate $\widehat x$ is called rounding, and the rounded quantity satisfies
\begin{equation}
    \widehat{x} =x(1+\delta), \label{fl(x)}
\end{equation}
where the relative error $\delta = (\widehat{x} - x)/x$ is such that $\abs{\delta} < 2^{1-p} = u_p$.
For round-to-nearest (RN) with any tie-breaking rule, we have the tighter bound $\abs{\delta} \leq u_p / (2+u_p)$~~\cite[Thm.~2.3]{mbdj18}.

\newcommand{\fpint}[1]{\ensuremath{\mathrm{ulp}_p(#1)}}
For $x \in \mathbb{R}$, we define the \emph{unit in the last place} as $\fpint{x} = 2^{e-p+1}$, where $e$ is the normalised exponent of $x$. Since $x = (-1)^s\cdot 2^e \cdot m$ with $1 \leq m < 2$, we have that $2^e \leq \lvert x \rvert < 2^{e+1}$ and
\begin{equation}\label{epsilon-bound}
    \fpint{x} = 2^{e} u_p
    \leq \abs{x} u_p.
\end{equation}
Let $x, y\in\mathcal F$ and $\op\in\{+, -, \times, \div\}$. We assume the standard model of floating-point arithmetic, whereby the error in one elementary operation is bounded, and we have
\begin{equation}
     \fl_p(x\; \op\; y) = (x\; \op\; y)(1+\delta),\qquad
     \fl_p(\sqrt{x}) = \sqrt{x}(1 + \delta) \label{fl(xopy)},
\end{equation}
where $\abs{\delta}$ is bounded by $u_p/2$ for RN with any tie-breaking rule and by $u_p$ for directed and stochastic rounding modes.

\subsection{Stochastic rounding}
\label{sec:SR}

\begin{definition}[Stochastic rounding]\label{def:sr}
Let $x \in \mathbb R$. The precision-$p$ stochastic rounding of $x$ to $\mathcal F$ is the Bernoulli random variable
\begin{equation}\label{eq:sr}
    \textnormal{SR}_p(x) =
    \begin{cases}
        \llceil x \rrceil_p,   & \text{with probability\ } q(x), \\
        \llfloor x \rrfloor_p, & \text{with probability\ } 1 - q(x),
    \end{cases}
    \qquad
    q(x) = \dfrac{x - \llfloor x \rrfloor_p}{\fpint{x}}.
\end{equation}
\end{definition}

If $x \not\in \mathcal F$, then $\fpint{x} = \llceil x \rrceil_p - \llfloor x \rrfloor_p$, which is the distance between the two floating-point numbers enclosing $x$. {\color{blue} Substituting this quantity in \eqref{eq:sr} when $x\not\in \mathcal F$ allows us to recover previous definitions of $\SR_p$ from the literature~\cite{cfhm22}.}

\begin{remark}
    This definition corresponds to what is called \emph{mode 1} SR or \textit{SR-nearness}. \emph{Mode 2} SR consists of taking $q(x)=1/2$ if $x\notin \mathcal F$ and $q(x)=0$ otherwise. While not our focus here, mode 2 SR can be an effective tool for doing what is called \emph{stochastic arithmetic}~\cite{vign04,gjp15,gjp18}, allowing one to detect instabilities in numerical routines and to provide accuracy estimates of computed results.
\end{remark}

The quantities in this definition are represented pictorially in \Cref{fig:sr}.
Note that if $x \in \mathcal F$, then $q(x) = 0$ and $\SR_p(x) = \llfloor x \rrfloor_p = x$ with probability 1. More generally, for $x \in \mathbb{R}$ we have
\begin{align*}
    \EE(\SR_p(x)) &= q(x)\llceil x \rrceil_p +\big(1-q(x)\big)\llfloor x \rrfloor_p \\
    &=  q(x)(\llceil x \rrceil_p - \llfloor x \rrfloor_p) + \llfloor x \rrfloor_p =x.
\end{align*}

The definition of $q(x)$ in~\eqref{eq:sr} assumes that $x$ is known with infinite precision. We now want to investigate how the behavior of $\SR_p$ changes when an infinitely precise $x$ is not available
\textcolor{blue}{or not used for efficiency reasons}, and we only have $\fl_{p+r}(x)$, the binary representation of $x$ truncated to the first $p+r$ binary digits, for some positive integer $r$. We use the letter $r$ to denote the additional bits of precision as this quantity corresponds to the number of random bits used to perform SR, as we will see later. This situation is also represented pictorially in \Cref{fig:sr}.

\def\ticksep{0.2}
\def\intsep{0.2}
\def\arrowsep{0.2}
\def\rtwosep{0.2}
\def\compssep{0.3}

\def\barheight{0.4}
\def\barsep{0.5}
\def\barlength{2.8}
\def\hdis{1.9}

\begin{figure}[t]
  \centering
  \begin{tikzpicture}[every node/.style={
      minimum width=0pt,
      minimum height=0pt,
      inner sep=0pt},semithick]


    \node (A) at (0,0) {};
    \node (B) at (3,0) {};
    \node (C) at (4,0) {};
    \node (D) at (8,0) {};

    \draw ($(A)-(\compssep,0)$)--($(D)+(\compssep,0)$);

    \foreach \x in {A,B,C,D} 
    \draw ($(\x)+(0,-\ticksep)$) -- ($(\x)+(0,\ticksep)$);


    \node [above=\ticksep+0.1 of A] {$\llfloor x \rrfloor_p$};
    \node [above=\ticksep+0.1 of B] {$x$};
    \node [above=\ticksep+0.1 of C] {$\fl_{p+r}(x)$};
    \node [above=\ticksep+0.1 of D] {$\llceil x \rrceil_p$};

    \draw [Stealth-Stealth]($(A)+(0.5*\intsep,-2*\intsep)$) --
    ($(B)+(-0.5*\intsep,-2*\intsep)$);
    \node [below=2.6*\intsep of $(A)!0.5!(B)$] {\small$q(x) \cdot  \fpint{x}$};
    \draw [Stealth-Stealth]($(B)+(0.5*\intsep,-2*\intsep)$) --
    ($(D)+(-0.5*\intsep,-2*\intsep)$);
    \node [below=2.6*\intsep of $(B)!0.5!(D)$] {\small$\bigl(1-q(x)\bigr) \cdot \fpint{x}$};

    \draw [Stealth-Stealth]($(A)+(0.5*\intsep,-6*\intsep)$) --
    ($(C)+(-0.5*\intsep,-6*\intsep)$);
    \node [below=6.6*\intsep of $(A)!0.5!(C)$] {\small$q_r(x) \cdot \fpint{x}$};
    \draw [Stealth-Stealth]($(C)+(0.5*\intsep,-6*\intsep)$) --
    ($(D)+(-0.5*\intsep,-6*\intsep)$);
    \node [below=6.6*\intsep of $(C)!0.5!(D)$] {\small$\bigl(1-q_r(x)\bigr) \cdot \fpint{x}$};

    \draw [Stealth-Stealth]($(A)+(0.5*\intsep,-10*\intsep)$) --
    ($(D)+(-0.5*\intsep,-10*\intsep)$);
    \node [below=10.6*\intsep of $(A)!0.5!(D)$] {\small$\fpint{x}$};

  \end{tikzpicture}
  \caption{Quantities used in the definitions~\eqref{eq:sr} and~\eqref{eq:sr-imp}.}
  \label{fig:sr}
\end{figure}
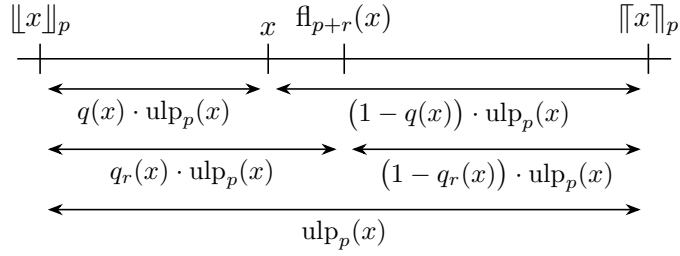

We note that $\fl_{p+r}$ could use a rounding mode other than truncation. All directed rounding modes satisfy $\fl_{p+r}(x) = x(1+\beta)$, where $\beta$ is a (deterministic) relative error such that $\abs{\beta} \leq 2^{1-p-r} = u_{p+r}$. For RN, the bound on $\abs{\beta}$ can be reduced by half, but this does not significantly affect our analysis.

\begin{definition}[limited-precision stochastic rounding]
    Let $x \in \mathbb R$. The limited-precision stochastic rounding using $r$ random bits $(\textnormal{SR}_{p,r})$ of $x$ to $\mathcal F$ is the Bernoulli random variable
\begin{equation}
    \label{eq:sr-imp}
    \textnormal{SR}_{p,r}(x) =
    \begin{cases}
        \llceil x \rrceil_p,   & \text{with probability\ } q_r(x), \\
        \llfloor x \rrfloor_p, & \text{with probability\ } 1- q_r(x),
    \end{cases}
    \qquad
    q_r(x) = \frac{\fl_{p+r}(x) - \llfloor x \rrfloor_p}{\fpint{x}}.
\end{equation}
\end{definition}

\begin{remark}
    \color{blue}{
    $\SR_{p,r}$ and $\SR_p$ both output $p$-bit precision results. The main difference is that $\SR_{p,r}$ uses $r$ random bits  during the operation before the truncation step, whereas $\SR_{p}$ essentially assumes an infinite value for $r$. Note that $\fl_{p+r}(x)$ is represented with $p+r$ bits of precision.}
\end{remark}

\begin{remark}
Depending on how $x$ is produced, one can determine the value of $r$ such that $\SR_{p,r}(x)=\SR_p(x)$. For instance, if $x=ab$ is the multiplication of two precision-$p$ values $a$ and $b$, then $x$ can be represented exactly as a precision-$2p$ value, meaning that $r=p$ random bits suffice. The situation is a bit different when $x=a \pm b$, since after normalization, $x$ can have a significand with up to $e_{\max}-e_{\min}+p-1$ bits, meaning that at most $r=e_{\max}-e_{\min}$ random bits are needed, which is indeed too large for common-use formats such as those specified in the IEEE 754 standard. \color{blue}{If $x = a / b$, then $x$ can have a non-terminating binary representation, meaning that we need an infinite number of random bits to perform $\SR_p$.} More generally, if $x$ is a precision-$q$ number with $q>p$, then it suffices to take $r=q-p$. 
\end{remark}

{\color{blue}{In actual hardware, the choice of $r$ varies among architectures. Whereas the Tesla~D1, AMD MI300 GPU, and Graphcore Colossus IPU take $r$ to be the minimal value needed to implement SR down-conversion exactly (i.e., it respects Definition~\ref{def:sr} when the input is a \texttt{binary32} value),
for the NVIDIA Blackwell architecture this is the case only for down-conversion to 16-bit formats. For 8, 6 and 4-bit down-conversion, a value of $r=16$ seems to always be used (see~\cite[sect.~9.7.9.21, Figs.~38--40]{nvidia-ptx8.7_isa}), which is not sufficient to respect Definition~\ref{def:sr}. There does not seem to be any public information regarding what value of $r$ is used in the Amazon Trainium chips.}}

\section{Properties of limited-precision stochastic rounding}
\label{sec:limited-precision-sr}
In this section, we demonstrate various properties of $\SR_{p,r}$ and, in particular, show that the mean independence property fails to hold. However, we prove a theorem that bounds the bias of the accumulated errors on algorithms with this rounding mode. Furthermore, using a technique similar to the one introduced in~\cite{esop23}, we present a model that allows us to compute probabilistic error bounds for algorithms using $\SR_{p,r}$.

Unlike $\SR_p$, the limited-precision $\SR_{p,r}$ operator is biased, since
\begin{align*}
    \EE\big(\SR_{p,r}(x)\big) &= q_r(x)\llceil x \rrceil_p +\big(1-q_r(x)\big)\llfloor x \rrfloor_p \\
    &=  q_r(x)(\llceil x \rrceil_p - \llfloor x \rrfloor_p) + \llfloor x \rrfloor_p\\
    &= \fl_{p+r}(x).
\end{align*}
Consequently, the bias $\EE\left(\SR_{p,r}(x) -x\right)$ is given by $\fl_{p+r}(x) -x$.
Let $\delta$ be such that $\SR_{p,r}(x) = x(1+\delta)$. Then
\begin{equation}
\label{eq:sr-bias}
    \EE(\delta) = \frac{\fl_{p+r}(x) -x}{x} = \beta.
\end{equation}
\begin{remark}
    \label{rem:SR_r-unbiased}
    With each error $\delta$ obtained with $\SR_{p,r}$, we have a corresponding deterministic error $\beta$ that arises from the computation of the function $\fl_{p+r}$.
    Moreover, for $x\notin \mathcal{F}$ and a large number of random bits $r$, $\fl_{p+r}(x)$ tends to $x$, which means that the rounding operator $\SR_{p,r}$ will become less and less biased \textcolor{blue}{as $r$ increases}.
\end{remark}

We now turn to bound the variance of the error in computation.
Let $x$ be the result of an elementary operation, and let $\SR_{p,r}(x) =x(1+\delta)$ that value rounded with $\SR_{p,r}$. If $x \in \mathcal{F}$, $\delta =0$, $\EE(\SR_{p,r} (x)) =x$, and $\VV(x)=0$. If not,  $\EE\big(\SR_{p,r}(x)\big)= \fl_{p+r}(x)$, and
\begin{align*}
 \VV\big(\SR_{p,r}(x)\big) &= \EE\big(\SR_{p,r}(x)^2\big) - \fl_{p+r}(x)^2 = \llceil x \rrceil_p^2 q_r(x) + \llfloor x \rrfloor_p^2 \big(1-q_r(x)\big) - \fl_{p+r}(x)^2 \\
 &= q_r(x) (\llceil x \rrceil_p^2 - \llfloor x \rrfloor_p^2) - (\fl_{p+r}(x)^2 - \llfloor x \rrfloor_p^2)\\
 &= q_r(x) \fpint{x} (\llceil x \rrceil_p + \llfloor x \rrfloor_p) - (\fl_{p+r}(x)-\llfloor x \rrfloor_p )(\fl_{p+r}(x)+ \llfloor x \rrfloor_p)\\
  &= q_r(x)\fpint{x} (\llceil x \rrceil_p + \llfloor x \rrfloor_p) - q_r(x) \fpint{x}(\fl_{p+r}(x)+ \llfloor x \rrfloor_p)\\
 &= q_r(x)\fpint{x} \big(\llceil x \rrceil_p - \fl_{p+r}(x) \big)\\
 &= \fpint{x}^2 q_r(x) \big(1-q_r(x)\big).
\end{align*}

Using~\eqref{epsilon-bound}, we obtain $\VV(\SR_{p,r}(x))\leq x^2 \frac{u_p^2}{4}$,
and
\begin{equation}\label{eq:delta}
    \VV(\delta) =
    \VV\left(\frac{\SR_{p,r}(x) -x}{x}\right)
    \leq \frac{u_p^2}{4}.
\end{equation}
The bound~\eqref{eq:delta} is analogous to that for $\SR_p$ discussed in~\cite[sect.~3]{esop23a} and, interestingly, does not depend on the number of random bits~$r$ used.

Let us consider an {\color{blue} execution of $k$ elementary} operations performed in floating-point arithmetic using $\SR_{p,r}$. {\color{blue} The following result shows that the mean independence property, which holds for $\SR_p$ and is shown in~\cite[lem.~5.2]{chm21}, does not extend to $\SR_{p,r}$. Moreover, the conditional expectation of the $k$th rounding error generated by an operation $a \textnormal{ op } b$ using $\SR_{p,r}$ is equal to the $k$th error produced when applying the rounding function $\fl_{p+r}$ to the infinitely precise result $c$ of the operation $a \textnormal{ op } b$.}

\begin{lemma}\label{lem:meanindp}
Let $a$ and $b$ be the result of $k-1$ scalar operations performed using limited-precision SR, and let $\delta_1, \ldots, \delta_{k-1}$ be the rounding errors these operations have produced. Let $c = a\;\op\;b$ for $\op \in\{+, -, \times, \div\}$, let $\beta_k$ be the deterministic relative rounding error defined by $\fl_{p+r}(c)=c(1+\beta_k)$ {\color{blue} corresponding to the truncation of $c$ to $p+r$ bits of precision}, and let $\widehat{c} = \textup{\SR}_{p,r}(c) = (a\;\op\;b) (1+\delta_k)$.
Then the random variable $\delta_k$ satisfies $\EE(\delta_k \mid  \delta_1,\ldots , \delta_{k-1}) = \beta_k$.
\end{lemma}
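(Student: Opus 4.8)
The plan is to reduce the statement to the unconditional identity $\EE(\SR_{p,r}(x)) = \fl_{p+r}(x)$ established just before the lemma, applied pointwise in the argument $c$. The key structural observation is that $a$ and $b$, being the outputs of the first $k-1$ scalar operations applied to the (fixed) exact inputs, are deterministic functions of $\delta_1, \ldots, \delta_{k-1}$; hence so are $c = a\;\op\;b$, the truncation $\fl_{p+r}(c)$, and the relative error $\beta_k = (\fl_{p+r}(c) - c)/c$. In particular $c$ and $\beta_k$ are $\sigma(\delta_1, \ldots, \delta_{k-1})$-measurable. The only fresh randomness entering $\widehat c = \SR_{p,r}(c)$ is the Bernoulli draw that selects between $\llceil c \rrceil_p$ and $\llfloor c \rrfloor_p$, which consumes a new batch of $r$ random bits, modeled as independent of all the random bits used by the earlier $k-1$ roundings.

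First I would dispose of the degenerate case: if $c \in \mathcal{F}$ then $\llfloor c \rrfloor_p = \llceil c \rrceil_p = c$, so $\SR_{p,r}(c) = c$ with probability one, $\delta_k = 0 = \beta_k$, and the claim is immediate. Assume henceforth $c \notin \mathcal{F}$, so $c \ne 0$. Write the $k$th rounding as $\SR_{p,r}(c) = \Phi(c, \xi_k)$, where $\xi_k$ is the random bit string drawn for this operation and $\Phi(x, \cdot)$ is the selection map in \eqref{eq:sr-imp}; by the modeling assumption $\xi_k$ is independent of $(\delta_1, \ldots, \delta_{k-1})$. Conditioning on $(\delta_1, \ldots, \delta_{k-1})$ freezes $c$ at a measurable value, so by the substitution ("freezing") rule for conditional expectation with respect to an independent source — combined with the Proposition recalled above that $\EE(X \mid Y) = X$ when $X$ is a function of $Y$ — we obtain
\begin{equation*}
\EE\big(\SR_{p,r}(c) \mid \delta_1, \ldots, \delta_{k-1}\big)
= \Big(\EE\,\Phi(x, \xi_k)\Big)\Big|_{x = c}
= \fl_{p+r}(x)\Big|_{x = c}
= \fl_{p+r}(c),
\end{equation*}
where the middle equality is precisely the computation $\EE(\SR_{p,r}(x)) = \fl_{p+r}(x)$ carried out before the lemma. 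Since $1/c$ is $\sigma(\delta_1, \ldots, \delta_{k-1})$-measurable, linearity of conditional expectation gives
\begin{equation*}
\EE(\delta_k \mid \delta_1, \ldots, \delta_{k-1})
= \frac{1}{c}\,\EE\big(\SR_{p,r}(c) - c \mid \delta_1, \ldots, \delta_{k-1}\big)
= \frac{\fl_{p+r}(c) - c}{c}
= \beta_k,
\end{equation*}
which is the assertion. Note that $\beta_k$ is itself a nontrivial random variable (a function of $\delta_1, \ldots, \delta_{k-1}$), not a constant, so the identity simultaneously shows that $\EE(\delta_k \mid \delta_1, \ldots, \delta_{k-1}) \ne \EE(\delta_k)$ in general, i.e., mean independence fails for $\SR_{p,r}$.

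I expect the only genuine subtlety to be the rigorous justification of the first display: one must fix a probability-space model in which the random bit strings $\xi_1, \xi_2, \ldots$ consumed by successive roundings are mutually independent (the standard modeling hypothesis for SR, implicit in \cite{chm21}), and then invoke the freezing rule for conditional expectation in order to "plug in" the measurable value $c$ into the already-known expectation $\EE(\SR_{p,r}(x)) = \fl_{p+r}(x)$. Everything else is bookkeeping: recording that $a$, $b$, $c$, and $\beta_k$ are all deterministic functions of $\delta_1, \ldots, \delta_{k-1}$, and reusing the expectation of $\SR_{p,r}$ applied to a fixed argument.
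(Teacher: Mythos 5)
Your proposal is correct and follows essentially the same route as the paper's proof: both rest on the observation that, conditionally on $\delta_1,\ldots,\delta_{k-1}$, the quantities $c$, $\llfloor c\rrfloor_p$, $\llceil c\rrceil_p$, and $q_r(c)$ are determined, so the conditional expectation of the Bernoulli draw is $q_r(c)\llceil c\rrceil_p + (1-q_r(c))\llfloor c\rrfloor_p = \fl_{p+r}(c)$, whence $\EE(\delta_k \mid \delta_1,\ldots,\delta_{k-1}) = \beta_k$. The paper writes out the conditional two-point distribution of $\delta_k$ directly rather than invoking a freezing lemma for the independent random bits, but this is only a difference in presentation, and your explicit handling of the degenerate case $c\in\mathcal F$ subsumes the paper's case $c=0$.
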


\begin{proof}
    The proof is analogous to that of~\cite[Lem.~5.2]{chm21}: the only difference is in the use of the probability $q_r(x)$ in \eqref{eq:sr-imp} instead of $q(x)$. If $c=0$, $\delta_k$ and $\beta_k$ are equal to $0$ and the result is immediate. Otherwise, the rounding error $\delta_k = (\widehat{c} - c)/c$ is a random variable that depends on $\delta_1,\ldots,\delta_{k-1}$ and has distribution
    $$
    \delta_k =
    \begin{cases}
        (\llceil c \rrceil_p - c)/{c},   &\text{with probability\ } q_r(c), \\
        (\llfloor c \rrfloor_p - c)/{c}, &\text{with probability\ } 1- q_r(c),
    \end{cases}
    \qquad
    q_r(c) = \frac{\fl_{p+r}(c) - \llfloor c \rrfloor_p}{\fpint{c}}.
    $$
    Moreover, $(\llceil c \rrceil_p - c)/c$ and $(\llfloor c \rrfloor_p - c)/c$ are themselves random variables that are determined
    by $\delta_1,\ldots,\delta_{k-1}$, which implies that
    \begin{equation*}
        \EE \left(\frac{\llceil c \rrceil_p - c}{c} \biggm| \delta_1,\ldots,\delta_{k-1} \right) = \frac{\llceil c \rrceil_p - c}{c},
        \quad
        \EE \left(\frac{\llfloor c \rrfloor_p - c}{c} \biggm| \delta_1,\ldots,\delta_{k-1} \right) = \frac{\llfloor c \rrfloor_p - c}{c}.
    \end{equation*}
    Therefore, we obtain
    \begin{align*}
    \EE(\delta_k \mid \delta_1,\ldots,\delta_{k-1})
    &=  q_r(c)\left(\frac{\llceil c \rrceil_p - c}{c}\right) + \big(1- q_r(c)\big) \left(\frac{\llfloor c \rrfloor_p - c}{c}\right)\\
    &=  q_r(c)\left(\frac{\llceil c \rrceil_p - c}{c} - \frac{\llfloor c \rrfloor_p - c}{c}\right) +  \frac{\llfloor c \rrfloor_p - c}{c}\\
    &=  q_r(c)\left(\frac{\llceil c \rrceil_p - \llfloor c \rrfloor_p}{c}\right) +  \frac{\llfloor c \rrfloor_p - c}{c}\\
    &= \frac{\fl_{p+r}(c) - \llfloor c \rrfloor_p}{c} +  \frac{\llfloor c \rrfloor_p - c}{c}\\
    &=  \frac{ \fl_{p+r}(c) -c}{c} = \beta_k.
    \end{align*}
\end{proof}

\begin{remark}
\label{rem:sr_r-cv-to-sr}
    From \Cref{rem:SR_r-unbiased}, if we assume that we have a large number of random bits $r$, the $\beta_1,\ldots,\beta_k$ will be equal to zero, and the $\delta_1,\ldots,\delta_k$ satisfy the mean independence property, which implies that the limited-precision SR operator $\SR_{p,r}$ coincides with the SR operator $\SR_p$.
\end{remark}

\begin{remark}
\label{rem:expectation}
    Note that, for $k \ge 2$, the value of $\beta_k$ depends on that of the random variables $\delta_1,\ldots,\delta_{k-1}$. Therefore, $\beta_k$ is a random variable despite having been produced by a deterministic rounding mode. From \Cref{lem:meanindp}, we have
    \begin{equation}
    \label{eq:delta=beta}
        \EE(\beta_k) = \EE\big(\EE(\delta_k \mid \delta_1,\ldots,\delta_{k-1})\big)= \EE(\delta_k).
    \end{equation}
\end{remark}

\begin{remark}
    Equation~\eqref{eq:delta=beta} shows that under $\SR_{p,r}$, the rounding error of an operation, $\delta$, and the truncation error applied for this operation, $\beta$, have the same expected value.
\end{remark}

\textcolor{blue}{The following theorem helps in establishing lower and upper bounds on the expectation of the accumulated error $\prod_{k= 1}^{n} (1+\delta_k)$ in an algorithm, where $\delta_1, \delta_2, \ldots, \delta_n$ are random errors obtained using $\SR_{p,r}$. }

\begin{theorem}
\label{thm:sr-mean}
\textcolor{blue}{
Let $\delta_1, \delta_2,\ldots, \delta_n$ and $\beta_1, \beta_2,\ldots, \beta_n$ be random variables such that $\EE(\delta_1) = \EE(\beta_1)$ and for all $2 \le k \le n$, $\EE\big(\delta_k \mid \delta_1,\ldots,\delta_{k-1}\big)= \beta_k$. Suppose there exists a constant $B> 0$ such that $\abs{\beta_k} \leq B$ for all $1 \le k \le n$.}
Then
\begin{equation*}
   \textcolor{blue}{(1-B)^{n}} \leq \EE\bigg(\prod_{k= 1}^{n} (1+\delta_k)\bigg)  \leq (1+B)^{n}.
\end{equation*}
\end{theorem}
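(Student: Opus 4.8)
The plan is to prove both inequalities by induction on $n$, using the tower property of conditional expectation to peel off one factor at a time. Write $P_n = \prod_{k=1}^n (1+\delta_k)$ and note that $P_n = P_{n-1}(1+\delta_n)$, where $P_{n-1}$ is a function of $\delta_1,\ldots,\delta_{n-1}$. First I would condition on $\delta_1,\ldots,\delta_{n-1}$ and pull the $P_{n-1}$ factor out: since $P_{n-1}$ is determined by $\delta_1,\ldots,\delta_{n-1}$, we have
\begin{equation*}
  \EE\big(P_n \mid \delta_1,\ldots,\delta_{n-1}\big)
  = P_{n-1}\,\EE\big(1+\delta_n \mid \delta_1,\ldots,\delta_{n-1}\big)
  = P_{n-1}\,(1+\beta_n),
\end{equation*}
using the hypothesis $\EE(\delta_n \mid \delta_1,\ldots,\delta_{n-1}) = \beta_n$. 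Taking expectations and applying the tower property gives $\EE(P_n) = \EE\big(P_{n-1}(1+\beta_n)\big)$. The subtlety here is that $\beta_n$ is itself a random variable (a function of $\delta_1,\ldots,\delta_{n-1}$), so I cannot simply factor $\EE(P_{n-1})\EE(1+\beta_n)$; instead I will bound $P_{n-1}(1+\beta_n)$ pointwise.

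The key structural observation is a sign argument that lets the inductive hypothesis propagate. From the induction hypothesis $\EE(P_{n-1}) \le (1+B)^{n-1}$ I want $\EE(P_n) \le (1+B)^n$. Since $\abs{\beta_n}\le B < 1$, we have $0 < 1-B \le 1+\beta_n \le 1+B$. Crucially I also need to know that $P_{n-1} \ge 0$ almost surely, so that multiplying the bound $1+\beta_n \le 1+B$ by $P_{n-1}$ preserves the inequality; this holds because $P_{n-1} = \prod_{k=1}^{n-1}(1+\delta_k)$ and each factor $1+\delta_k$ satisfies $1+\delta_k \ge 1 - u_p > 0$ (recall $\abs{\delta_k} < u_p < 1$ from the standard model). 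Granting nonnegativity, $P_{n-1}(1+\beta_n) \le (1+B)P_{n-1}$ pointwise, hence $\EE(P_n) \le (1+B)\EE(P_{n-1}) \le (1+B)^n$. The lower bound is symmetric: $P_{n-1}(1+\beta_n) \ge (1-B)P_{n-1} \ge 0$ pointwise, so $\EE(P_n) \ge (1-B)\EE(P_{n-1}) \ge (1-B)^n$, where the induction hypothesis supplies $\EE(P_{n-1}) \ge (1-B)^{n-1}$.

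For the base case $n=1$, we have $\EE(P_1) = \EE(1+\delta_1) = 1 + \EE(\delta_1) = 1 + \EE(\beta_1)$ by the first hypothesis, and since $\abs{\beta_1}\le B$ implies $\abs{\EE(\beta_1)}\le B$ by Jensen (or monotonicity of expectation), we get $1-B \le \EE(P_1) \le 1+B$, as required.

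I expect the main obstacle to be the bookkeeping around $\beta_n$ being random rather than deterministic: one must resist the temptation to write $\EE(P_{n-1}(1+\beta_n)) = \EE(P_{n-1})(1+\EE(\beta_n))$, which is false in general, and instead argue via the pointwise bound that hinges on the nonnegativity of the partial product $P_{n-1}$. A secondary point worth stating explicitly is that the hypotheses of the theorem are exactly what \Cref{lem:meanindp} and \Cref{rem:expectation} deliver in the $\SR_{p,r}$ setting (with $B = u_{p+r}$, the bound on the truncation errors $\abs{\beta_k}$), so the theorem is directly applicable to error analysis of algorithms; but that identification belongs in the application sections, not in the proof itself.
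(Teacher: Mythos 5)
Your proof is correct and follows essentially the same route as the paper's: induction on $n$, the tower property to reduce $\EE(P_n)$ to $\EE\bigl(P_{n-1}(1+\beta_n)\bigr)$, and the pointwise bound $1-B \le 1+\beta_n \le 1+B$. The one place you are more careful than the paper is in flagging that the step $\EE\bigl(P_{n-1}(1+\beta_n)\bigr) \le (1+B)\,\EE(P_{n-1})$ requires $P_{n-1}\ge 0$ almost surely; note, however, that your justification ($\abs{\delta_k}<u_p$) appeals to the floating-point model rather than to the theorem's stated hypotheses, so strictly speaking both your proof and the paper's rely on the unstated assumption $1+\delta_k\ge 0$ a.s., which holds automatically in the intended $\SR_{p,r}$ application.
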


\begin{proof}
    The proof is by induction on $n$. For $n=1$, we have $\EE(1+\delta_1) = 1+ \EE(\delta_1) = 1+ \EE(\beta_1)$, and
    $$\textcolor{blue}{(1-B)} \leq 1+ \beta_1 \leq (1+B).$$
    For the inductive step, let $Q_n =\prod_{k= 1}^{n} (1+\delta_k)$, and assume that the inequalities hold for $Q_{n-1}$. Since $Q_n = Q_{n-1} (1+\delta_n)$ \textcolor{blue}{and $\EE\bigl(\delta_n \mid  \delta_1,\ldots , \delta_{n-1}\bigr) = \beta_n$}, the law of total expectation $\EE(X)= \EE(\EE(X \mid Y))$ gives
    \begin{align*}
        \EE(Q_n) &= \EE\bigl( Q_{n-1} (1+\delta_n)\bigr) \\
        &= \EE\bigl(\EE(Q_{n-1} (1+\delta_n) \mid  \delta_1,\ldots , \delta_{n-1})\bigr) \\
        &= \EE\bigl(Q_{n-1} \EE( (1+\delta_n) \mid  \delta_1,\ldots , \delta_{n-1})\bigr) \\
        &=  \EE\bigl(Q_{n-1}(1+\beta_n)\bigr).
    \end{align*}
    \textcolor{blue}{Moreover $$
    \begin{cases} 
    \EE\bigl(Q_{n-1}(1+\beta_n)\bigr) \leq  \EE(Q_{n-1}) (1+B) \leq (1+B)^n, \\  
    \EE\bigl(Q_{n-1}(1+\beta_n)\bigr) \geq  \EE(Q_{n-1}) (1-B) \geq (1-B)^n.
    \end{cases}
    $$
    We thus have}
    \begin{align*}
        \textcolor{blue}{(1-B)^{n}} \leq \EE\bigg(\prod_{k= 1}^{n} (1+\delta_k)\bigg)  \leq (1+B)^{n}.
    \end{align*}

\end{proof}

\begin{remark}
    Let $\delta_1, \delta_2,\ldots, \delta_n$ and $\beta_1, \beta_2,\ldots, \beta_n$ as in \cref{lem:meanindp}. Then, by applying \cref{thm:sr-mean} we have:
    \begin{equation}
    \label{eq:model}
   \textcolor{blue}{(1-u_{p+r})^{n}} \leq \EE\bigg(\prod_{k= 1}^{n} (1+\delta_k)\bigg)  \leq (1+u_{p+r})^{n}.
\end{equation}
\end{remark}

\begin{remark}
    \textcolor{blue}{
    When using $\SR_p$,~\cite[Lem 6.1]{chm21} shows that $\EE\bigg(\prod_{k= 1}^{n} (1+\delta_k)\bigg) =1$. For $\SR_{p,r}$, taking $r \rightarrow \infty$ in~\eqref{eq:model} implies that $u_{p+r} \rightarrow 0$, which allows us to recover the same result as $\SR_p$.
    }
\end{remark}

\newcommand{\pset}[1][S]{\ensuremath{\mathcal P(#1)}}

In order to prove our main result, we need a technical lemma to rewrite a product of binomials as a sum of monomials. In the result, we denote by $\pset$ the power set of the set $S$, that is, the set whose elements are all possible subsets of $S$. We recall that $\pset$ can be defined recursively, since
\begin{equation}\label{eq:pset-def}
\begin{aligned}
\pset[\emptyset] &= \{\emptyset\},\\
\pset[S \cup \{\ell\}] &=
    \left\{
        T \cup \{\ell\} : T \in \pset[S]
    \right\} \cup \pset[S].
\end{aligned}
\end{equation}

\newcommand{\intinv}[1][m,n]{\ensuremath{\llbracket #1 \rrbracket_{\mathbb{N}}}}
\begin{lemma}
\label{lem:technical}
    Let $m,n \in \mathbb{N}$, with $m \le n$, let $\mathcal{I}= \{k \in \mathbb{N} : m \le k \le n\}$, and let $x_{k}, y_k\in \mathbb{R}$, for all $k \in \mathcal{I}$. Then, we have
    \begin{equation}\label{eq:tech-result}
    \prod_{k=m}^{n} (x_k+y_k)
    = \prod_{k = m}^{n} x_k +
    \sum_{\substack{K \in \mathcal{P}(\mathcal I)\\K \neq \mathcal I}}
    \left(\prod_{\vphantom{\mathcal I\setminus}i \in K} x_i \prod_{j \in \mathcal I \setminus K} y_j\right).
    \end{equation}
\end{lemma}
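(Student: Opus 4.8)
The plan is to prove the identity by induction on the size of the index set $\mathcal{I}$, i.e., on $n-m$. The base case is $n=m$, where $\mathcal{I} = \{m\}$, the left-hand side is simply $x_m + y_m$, and on the right-hand side $\prod_{k=m}^{m} x_k = x_m$ while the only subset $K \in \mathcal{P}(\mathcal{I})$ with $K \neq \mathcal{I}$ is $K = \emptyset$, contributing the empty product over $i \in K$ times $y_m = y_m$. So both sides equal $x_m + y_m$.

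For the inductive step, I would write $\mathcal{I} = \mathcal{I}' \cup \{n\}$ where $\mathcal{I}' = \{k : m \le k \le n-1\}$, and peel off the last factor: $\prod_{k=m}^{n}(x_k+y_k) = \bigl(\prod_{k=m}^{n-1}(x_k+y_k)\bigr)(x_n+y_n)$. Applying the induction hypothesis to the first product gives $\prod_{k=m}^{n-1} x_k + \sum_{K' \in \mathcal{P}(\mathcal{I}'),\, K' \neq \mathcal{I}'} \bigl(\prod_{i \in K'} x_i \prod_{j \in \mathcal{I}' \setminus K'} y_j\bigr)$. Then I multiply this sum of monomials by $(x_n + y_n)$ and expand, obtaining four groups of terms: $\bigl(\prod_{k=m}^{n-1} x_k\bigr)x_n$, $\bigl(\prod_{k=m}^{n-1} x_k\bigr)y_n$, and the two families obtained by multiplying each monomial indexed by $K' \subsetneq \mathcal{I}'$ either by $x_n$ or by $y_n$.

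The heart of the argument is a bookkeeping step: I claim these four groups reassemble exactly into $\prod_{k=m}^{n} x_k + \sum_{K \in \mathcal{P}(\mathcal{I}),\, K \neq \mathcal{I}} \bigl(\prod_{i \in K} x_i \prod_{j \in \mathcal{I}\setminus K} y_j\bigr)$. The term $\bigl(\prod_{k=m}^{n-1}x_k\bigr)x_n = \prod_{k=m}^{n} x_k$ is the leading term. Every other subset $K \subsetneq \mathcal{I}$ is classified by whether $n \in K$: if $n \notin K$, then $K \subseteq \mathcal{I}'$ and the monomial $\prod_{i \in K} x_i \prod_{j \in \mathcal{I}\setminus K} y_j$ has $y_n$ as a factor, matching either $\bigl(\prod_{k=m}^{n-1}x_k\bigr)y_n$ (when $K = \mathcal{I}'$) or a $K' \subsetneq \mathcal{I}'$ monomial multiplied by $y_n$; if $n \in K$, write $K = K' \cup \{n\}$ with $K' \subsetneq \mathcal{I}'$ (the constraint $K \neq \mathcal{I}$ forces $K' \neq \mathcal{I}'$), and the monomial equals the $K'$ monomial times $x_n$. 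This is precisely the recursive description of $\mathcal{P}(\mathcal{I})$ from~\eqref{eq:pset-def}, so the correspondence is a bijection and the terms match up one-to-one. I expect this reindexing — carefully tracking the $K = \mathcal{I}'$ edge case and the condition $K \neq \mathcal{I}$ as $n$ moves in and out of the subset — to be the only place where care is needed; everything else is a routine expansion. An alternative, perhaps cleaner, route is a direct combinatorial proof: expanding $\prod_{k=m}^n (x_k + y_k)$ by choosing, for each $k$, either $x_k$ or $y_k$, indexes the resulting monomials precisely by the subsets $K \subseteq \mathcal{I}$ (those $k$ for which $x_k$ is chosen), and separating off the full subset $K = \mathcal{I}$ yields~\eqref{eq:tech-result} immediately; I would likely present the induction as the primary proof and mention this viewpoint as motivation.
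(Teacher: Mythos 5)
Your proof is correct and follows essentially the same route as the paper: induction on the size of the index set, peeling off the last factor and using the recursive description of the power set in~\eqref{eq:pset-def} to reindex the resulting sums. The only (cosmetic) difference is that the paper first proves the unseparated identity $\prod_{k=m}^{n}(x_k+y_k)=\sum_{K\in\mathcal{P}(\mathcal I)}\bigl(\prod_{i\in K}x_i\prod_{j\in\mathcal I\setminus K}y_j\bigr)$ and extracts the $K=\mathcal I$ term at the end, whereas you carry the separated form through the induction, which forces the extra bookkeeping around $K=\mathcal I'$ and $K\neq\mathcal I$ that you correctly handle.
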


\begin{proof}
First, we prove by induction on the size of the index set $\mathcal I$ that
\begin{equation}\label{eq:tech-proof}
    \prod_{k=m}^{n} (x_k+y_k) =
    \sum_{K \in \mathcal{P}(\mathcal I)}
    \left(\prod_{\vphantom{\mathcal I\setminus}i \in K} x_i \prod_{j \in \mathcal I \setminus K} y_j\right),
    \qquad
    \mathcal I = \{k \in \mathbb{N} : m \le k \le n\}.
\end{equation}
\textcolor{blue}{
For $K= \emptyset$, we use the fact that $\prod_{\vphantom{\mathcal I\setminus}i \in K} x_i =1$.}
It is easy to verify that~\eqref{eq:tech-proof} holds for $m = n$. For the inductive step, assume that $\eqref{eq:tech-proof}$ holds. Without loss of generality, we extend the set by incrementing $n$, but one could prove the result analogously by decrementing $m$.
We have
\begin{align*}
    \prod_{k=m}^{n+1} (x_k+y_k)
    &= (x_{n+1} + y_{n+1}) \prod_{k=m}^{n} (x_k+y_k)\\
    &= (x_{n+1} + y_{n+1}) \sum_{K \in \mathcal{P}(\mathcal I)}
    \left(\prod_{\vphantom{\mathcal I\setminus}i \in K} x_i \prod_{j \in \mathcal I \setminus K} y_j\right),
    \
    \mathcal I = \{k \in \mathbb{N} : m \le k \le n\}\\
    &= x_{n+1} \sum_{K \in \mathcal{P}(\mathcal I)}
    \left(\prod_{\vphantom{\mathcal I\setminus}i \in K} x_i \prod_{j \in \mathcal I \setminus K} y_j\right)
    +  y_{n+1} \sum_{K \in \mathcal{P}(\mathcal I)}
    \left(\prod_{\vphantom{\mathcal I\setminus}i \in K} x_i \prod_{j \in \mathcal I \setminus K} y_j\right)\\
    &= \sum_{K \in \mathcal{P}(\mathcal I)}
    \left(\prod_{\vphantom{\mathcal I\setminus}i \in K  \cup \{n+1\}} x_i \prod_{j \in \mathcal I \setminus K} y_j\right)
    + \sum_{K \in \mathcal{P}(\mathcal I)}
    \left(\prod_{\vphantom{\mathcal I\setminus}i \in K} x_i \prod_{j \in \mathcal I \cup \{n+1\} \setminus K} y_j\right)\!.
\end{align*}
Note that the sum on the left is over the elements of $\{T \cup \{n+1\}: T \in \mathcal{P}(\mathcal I)\},$ and that on the right is over the elements of $\mathcal{P}$. Using the inductive step of~\eqref{eq:pset-def} with $\ell = n+1$, we obtain
\begin{equation*}
    \prod_{k=m}^{n+1} (x_k+y_k) =
    \sum_{K \in \mathcal{P}(\mathcal I')}
    \left(\prod_{\vphantom{\mathcal I\setminus}i \in K} x_i \prod_{j \in \mathcal I' \setminus K} y_j\right),
    \qquad
    \mathcal I' = \{k \in \mathbb{N} : m \le k \le n+1\}.
\end{equation*}
This establishes~\eqref{eq:tech-proof} for $\mathcal I'$.
To obtain~\eqref{eq:tech-result} from~\eqref{eq:tech-proof}, it suffices to extract the term $K = \mathcal I$, which corresponds to the product
\begin{align*}
    \prod_{k=m}^n x_k.
\end{align*}
\end{proof}

Equation~\eqref{eq:sr-bias} shows that $\SR_{p,r}$ is biased and
\Cref{lem:meanindp} demonstrates that $\SR_{p,r}$ does not satisfy the mean independence property. This complicates the direct theoretical analysis of $\SR_{p,r}$, since unbiasedness and mean independence are principal tools used to derive error bounds for algorithms using SR. However, it has been shown~\cite{esop23} that, for some algorithms, even when the use of SR leads to a biased result, it is possible to obtain probabilistic bounds in $\mathcal{O}(\sqrt{n}u_p)$. Building on similar techniques, we propose the following general model to study algorithms under $\SR_{p,r}$.

\begin{lemma}
    \label{lem:main-result}
    Let $\delta_1, \delta_2,\ldots, \delta_n$ be random errors produced by a sequence of elementary operations using $\textup{\SR}_{p,r}$, and let $\beta_1, \beta_2,\ldots, \beta_n$ be their corresponding errors incurred by $\fl_{p+r}$. Then, the random variables $\alpha_k = \delta_k - \beta_k$ for $1\leq k \leq n$ are mean independent, that is to say,
    $$  \EE(\alpha_k \mid \alpha_1,\ldots,\alpha_{k-1}) = \EE(\alpha_k) = 0.
    $$
    Moreover, for all $1\leq i \leq n$,
    \begin{equation}
         \prod_{k= i}^{n} (1+ \delta_{k}) = \prod_{k= i}^{n} (1+\alpha_{k}) +  \mathcal{B}_i,
    \end{equation}
    where $\mathcal{B}_i = \sum_{\substack{K \in \mathcal{P}(\mathcal I_i)\\ K \neq \mathcal I_i}}
        \left(\prod_{i \in K} (1 + \alpha_{i}) \prod_{j \in \mathcal I_i \setminus K} \beta_{j}\right)$ and $\mathcal I_i = \{k \in \mathbb{N} : i \le k \le n\}$  such that
        \begin{equation}
            \label{eq:b-bound}
            \abs{\mathcal{B}_i} \leq \gamma_{n-i+1}(u_p +u_{p+r}) -\gamma_{n-i+1}(u_p),
        \end{equation}
        where $\gamma_{m}(x)=(1+x)^{m} -1$.
\end{lemma}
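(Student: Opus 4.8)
The plan is to prove the three assertions in turn: the mean independence of the $\alpha_k$, the product identity, and the bound on $\abs{\mathcal B_i}$.

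First I would handle the mean independence of $\alpha_k = \delta_k - \beta_k$. From \Cref{lem:meanindp} we know $\EE(\delta_k \mid \delta_1,\ldots,\delta_{k-1}) = \beta_k$, and from \Cref{rem:expectation} that $\beta_k$ is a (deterministic) function of $\delta_1,\ldots,\delta_{k-1}$, hence fully determined by them. The subtlety is that the lemma conditions on $\alpha_1,\ldots,\alpha_{k-1}$ rather than on $\delta_1,\ldots,\delta_{k-1}$; I would argue that each $\delta_j$ (and hence each $\beta_j$, and hence each $\alpha_j$) for $j \le k-1$ is a function of $\delta_1,\ldots,\delta_{k-1}$, and conversely $\delta_j$ is recovered from $\alpha_1,\ldots,\alpha_j$ by induction ($\delta_1 = \alpha_1 + \beta_1$ with $\beta_1$ constant, then $\delta_j = \alpha_j + \beta_j$ with $\beta_j$ a function of $\delta_1,\ldots,\delta_{j-1}$ already reconstructed), so the two $\sigma$-algebras generated by $(\delta_1,\ldots,\delta_{k-1})$ and by $(\alpha_1,\ldots,\alpha_{k-1})$ coincide. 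Conditioning on either, $\EE(\alpha_k \mid \cdot) = \EE(\delta_k \mid \cdot) - \beta_k = \beta_k - \beta_k = 0$, where I use that $\beta_k$ is measurable with respect to this $\sigma$-algebra. Taking full expectations gives $\EE(\alpha_k) = 0$.

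Next comes the product identity. I would write $1 + \delta_k = (1+\alpha_k) + \beta_k$ and apply \Cref{lem:technical} with $x_k = 1+\alpha_k$, $y_k = \beta_k$, over the index set $\mathcal I_i = \{i,\ldots,n\}$. This immediately yields
$$
\prod_{k=i}^n (1+\delta_k) = \prod_{k=i}^n (1+\alpha_k) + \sum_{\substack{K \in \mathcal P(\mathcal I_i)\\ K \neq \mathcal I_i}} \Bigl(\prod_{j \in K}(1+\alpha_j) \prod_{j \in \mathcal I_i \setminus K} \beta_j\Bigr),
$$
which is exactly $\prod_{k=i}^n(1+\alpha_k) + \mathcal B_i$ with the stated $\mathcal B_i$. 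This step is essentially bookkeeping once the technical lemma is in hand.

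The main obstacle is the bound \eqref{eq:b-bound} on $\abs{\mathcal B_i}$. I would bound $\abs{\mathcal B_i}$ termwise: for a subset $K$ with $\abs{K} = s < m := n-i+1$, the corresponding monomial has absolute value at most $(1+\abs{\alpha_j})^{s} \cdot B^{m-s}$ where I need a bound on $\abs{\alpha_j}$ and on $\abs{\beta_j}$. Here $\abs{\beta_j} \le u_{p+r}$ by the definition of $\fl_{p+r}$, and $\abs{\alpha_j} = \abs{\delta_j - \beta_j} \le \abs{\delta_j} + \abs{\beta_j} \le u_p + u_{p+r}$; a cleaner route is to note $\abs{1+\alpha_j} = \abs{1 + \delta_j - \beta_j} \le 1 + u_p$ is not quite right, so instead I would use $\abs{1+\alpha_j} \le 1 + \abs{\alpha_j} \le 1 + u_p + u_{p+r}$ — but the target bound suggests the intended grouping is $\abs{1+\alpha_j} \le 1+u_p$ via $1+\alpha_j = (1+\delta_j) - \beta_j$ and then absorbing. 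The count of subsets of size $s$ is $\binom{m}{s}$, and we sum over $s = 0,\ldots,m-1$, so
$$
\abs{\mathcal B_i} \le \sum_{s=0}^{m-1} \binom{m}{s} (1+u_p)^{s}\, u_{p+r}^{\,m-s} = (1+u_p+u_{p+r})^m - (1+u_p)^m,
$$
by the binomial theorem (the full sum $s=0,\ldots,m$ gives $(1+u_p+u_{p+r})^m$ and we subtract the omitted $s=m$ term $(1+u_p)^m$). Recognizing $(1+u_p+u_{p+r})^m - 1 = \gamma_m(u_p+u_{p+r})$ and $(1+u_p)^m - 1 = \gamma_m(u_p)$, this is precisely $\gamma_{n-i+1}(u_p+u_{p+r}) - \gamma_{n-i+1}(u_p)$. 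The delicate points to get right are: justifying $\abs{1+\alpha_j} \le 1+u_p$ (which follows since $1+\alpha_j = 1 + \delta_j - \beta_j$, and writing it as $(1+\delta_j)(1) - \beta_j$ doesn't obviously bound it — I would instead observe that $1+\alpha_j$ equals $(1+\delta_j) - \beta_j$ where $1+\delta_j \in [1-u_p, 1+u_p]$ and $\beta_j$ has the same sign structure as the truncation direction, or more robustly just use $\abs{\alpha_j}\le u_p$ which holds because $\delta_j$ and $\beta_j$ round in a consistent direction so $\delta_j - \beta_j$ cannot exceed $u_p$ in magnitude — this sign bookkeeping is where care is needed), and ensuring the termwise triangle inequality is applied correctly over $\mathcal P(\mathcal I_i) \setminus \{\mathcal I_i\}$.
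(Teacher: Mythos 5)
Your proposal matches the paper's proof in all essentials: the same transfer of \Cref{lem:meanindp} to conditioning on the $\alpha$'s (your $\sigma$-algebra identification is in fact more careful than the paper's one-line assertion that $\beta_k$ is determined by $\alpha_1,\ldots,\alpha_{k-1}$), the same application of \Cref{lem:technical} with $x_k=1+\alpha_k$, $y_k=\beta_k$, and the same termwise binomial-theorem bound. The one point you flagged as delicate, $\abs{\alpha_k}\le u_p$, is simply asserted in the paper, and the clean justification is the one you were circling: writing $c$ for the exact result of the $k$th operation, $\alpha_k c = \SR_{p,r}(c)-\fl_{p+r}(c)$, and both $\SR_{p,r}(c)$ and $\fl_{p+r}(c)$ lie in the interval $[\llfloor c\rrfloor_p,\llceil c\rrceil_p]$ of width $\fpint{c}\le\abs{c}u_p$, so $\abs{\alpha_k}\le u_p$ without any sign bookkeeping; the naive triangle inequality $\abs{\alpha_k}\le u_p+u_{p+r}$ would only yield the weaker bound $(1+u_p+2u_{p+r})^m-(1+u_p+u_{p+r})^m$.
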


\begin{proof}
    From~\eqref{eq:delta=beta}, we can conclude that $\EE(\alpha_k) = \EE(\delta_k) - \EE(\beta_k)=0$. Moreover, $\beta_k$ is entirely determined by $\delta_1,\ldots,\delta_{k-1}$, in particular $\alpha_1,\ldots,\alpha_{k-1}$.
\Cref{lem:meanindp} shows that $\EE(\delta_k \mid  \delta_1,\ldots , \delta_{k-1}) = \beta_k$, in particular, $ \EE(\delta_k \mid \alpha_1,\ldots,\alpha_{k-1})$ is also given by $\beta_k$. We thus have
    \begin{align*}
        \EE(\alpha_k \mid \alpha_1,\ldots,\alpha_{k-1}) &= \EE(\delta_k - \beta_k \mid \alpha_1,\ldots,\alpha_{k-1})\\
        &=  \EE(\delta_k \mid \alpha_1,\ldots,\alpha_{k-1}) - \EE(\beta_k \mid \alpha_1,\ldots,\alpha_{k-1})\\
        &= \beta_k - \beta_k =0.
    \end{align*}
    By applying \Cref{lem:technical} with $x_k = 1+\alpha_{k}$ and $y_k = \beta_{k}$ we obtain
     \begin{align*}
        \prod_{k= i}^{n} (1+\alpha_{k} + \beta_{k}) &=
        \prod_{k= i}^{n} (1+\alpha_{k}) +
        \sum_{\substack{K \in \mathcal{P}(\mathcal I_i)\\ K \neq \mathcal I_i}}
        \left(\prod_{i \in K} (1 + \alpha_{i}) \prod_{j \in \mathcal I_i \setminus K} \beta_{j}\right)\\
        &= \prod_{k= i}^{n} (1+\alpha_{k}) +  \mathcal{B}_i.
    \end{align*}
    Moreover
    \begin{align*}
      \abs{\mathcal{B}_i}
     &= 
        \sum_{\substack{K \in \mathcal{P}(\mathcal I_i)\\ K \neq \mathcal I_i}}
        \left(\prod_{i \in K} (1 + \alpha_{i}) \prod_{j \in \mathcal I_i \setminus K} \beta_{j}\right)\\
     &\leq 
        \sum_{\substack{K \in \mathcal{P}(\mathcal I_i)\\ K \neq \mathcal I_i}}
        (1 + u_p)^{\abs{K}} u_{p+r}^{n-i+1-\abs{K}}
        & \text{because\ } \abs{\alpha_{k}} \leq u_{p} \text{\ and\ } \abs{\beta_{k}} \leq u_{p+r}\\
    &= 
        \sum_{k=1}^{n-i+1}{n-i+1 \choose k}
        (1 + u_p)^{k} u_{p+r}^{n-i+1-k}
        & \\
     &= (1+u_p + u_{p+r})^{n-i+1} - (1+u_p)^{n-i+1}\\
     &= \gamma_{n-i+1}(u_p +u_{p+r}) -\gamma_{n-i+1}(u_p).&
\end{align*}

\end{proof}

\section{Error analysis of algorithms with limited-precision SR}
\label{sec:applications}
We are now ready to apply our results on limited-precision SR in order to analyze two common algorithms, recursive summation and inner product of vectors of floating-point numbers. We use \Cref{thm:sr-mean} to compute bounds on the biases of these algorithms. Furthermore, we use \Cref{lem:main-result} to compute probabilistic bounds of the relative errors of these algorithms under $\SR_{p,r}$. These bounds are established using two methods: 
\begin{itemize}
    \item martingales (\Cref{def:martingale}) and the Azuma--Hoeffding ineq.~(\Cref{lem:azuma}).
    \item the variance bound proved in~\cite[Lem.~3.1]{esop23a} and the Bienaymé--Chebyshev ineq.~(\Cref{lem:bien-cheb-ineq}). 
\end{itemize}
We show that these bounds are \textcolor{blue}{proportional to $\sqrt{n}u_p + nu_{p+r}$.}

\textcolor{blue}{An important observation is that all theorems proved in this section naturally converge to their established counterparts in the literature. More precisely, by letting $r$ tend to $\infty$ in our bounds, $u_{p+r} \to 0$, which recovers the results previously established for $\SR_p$, confirming the consistency of our study with existing theory.}

\subsection{Recursive summation}
\label{sub-sec:summation}
\newcommand{\condnum}[1]{\ensuremath{\kappa(#1)}}

Let $a \in \mathbb{R}^n$. We will now perform a roundoff error analysis for recursively computing the sum
\begin{equation}\label{eq:sum}
y = \sum_{i=1}^{n} a_i
\end{equation}
using precision-$p$ floating-point arithmetic with $\SR_{p,r}$.
In our analysis, we will rely on the condition number of the sum in~\eqref{eq:sum}, defined by
\begin{equation}\label{eq:cond-num-sum}
    \condnum{a} = \dfrac{\sum_{i=1}^{n} \abs{a_i}}{\abs{\sum_{i=1}^{n} a_i}},
\end{equation}
and on the error function
\begin{equation}\label{eq:gamma_n_u}
\gamma_n(u)= (1+u)^{n}-1 = n u + \mathcal{O}(u^2) \ \text{for} \ nu \ll 1.
\end{equation}

If we denote $\widehat{s}_k=\SR_{p,r}(\widehat{s}_{k-1}+a_k)$ for $k=2,\ldots,n$, we have
\vspace{0.2cm}
\begin{center}
	{\renewcommand{\arraystretch}{1.3}

		\begin{tabular}{| L{4.5cm}| L{3.5cm}| }
			\hline  Limited-precision SR & Exact computation  \\
			\hline    $\widehat{s}_1 = a_1  $ &  $s_1 = a_1 $  \\
			$\widehat{s}_{2}=(\widehat{s}_{1} + a_2) (1+\delta_{1}) $ &  $s_{2} = s_{1} + a_2 $\\
			$\widehat{s}_{k} = (\widehat{s}_{k-1} +a_{k})(1+\delta_{k-1}) $ &  $s_{k} = s_{k-1} +a_{k} $\\
			\hline  $\widehat{y}=\widehat{s}_{n} $ &  $y=s_{n} $\\
			\hline

		\end{tabular}
	}
\end{center}
\vspace{0.2cm}

It follows that
\begin{equation}
	\label{eq:summation}
	\widehat{y} = \sum_{i=1}^{n}
        \biggl(a_i \prod_{k= \max\{i,2\}}^{n} (1+\delta_{k-1})\biggr).
\end{equation}
Note that for $2\leq k \leq n$, one has $\abs{\delta_{k-1}} \leq u_p$ and $\abs{\EE(\delta_{k-1})}  = \abs{\EE(\beta_{k-1})}  \leq u_{p+r} $, where $\beta_{k-1}$ is defined analogously to \eqref{eq:sr-bias} as
\begin{equation}
\beta_{k-1} = \frac{\fl_{p+r}(\widehat{s}_{k-1} +a_k) -(\widehat{s}_{k-1}+a_k)}{\widehat{s}_{k-1} +a_k}.
\end{equation}
In the following theorem, we give a bound on the summation bias in \Cref{eq:summation}.

\begin{theorem}\label{thm:sum-expval}
    The quantity $\widehat y$ in \eqref{eq:summation} satisfies
    \begin{equation}
    \label{eq:error_sum}
    \frac{\abs{\EE(\widehat y) - y}}{\abs{y}} \leq \condnum{a} \gamma_{n-1}(u_{p+r}), 
    \end{equation}
    where $\condnum{a}$ and $\gamma_n(u_{p+r})$ are defined in~\eqref{eq:cond-num-sum} and \eqref{eq:gamma_n_u}, respectively.
\end{theorem}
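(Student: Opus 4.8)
The plan is to start from the explicit expansion~\eqref{eq:summation}, take expectations, and subtract $y = \sum_{i=1}^{n} a_i$. By linearity of expectation,
\[
\EE(\widehat y) - y = \sum_{i=1}^{n} a_i\bigl(\EE(P_i) - 1\bigr),
\qquad
P_i := \prod_{k=\max\{i,2\}}^{n} (1+\delta_{k-1}),
\]
so by the triangle inequality $\abs{\EE(\widehat y) - y} \le \sum_{i=1}^{n} \abs{a_i}\,\abs{\EE(P_i) - 1}$, and the whole proof reduces to a uniform bound on $\abs{\EE(P_i) - 1}$.

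To obtain such a bound, fix $i$ and observe that $P_i$ is a product of $m_i \le n-1$ factors $1+\delta_{k-1}$, where (as recorded just before the statement) $\abs{\delta_{k-1}} \le u_p$, $\EE(\delta_{k-1} \mid \delta_1,\dots,\delta_{k-2}) = \beta_{k-1}$, and $\abs{\beta_{k-1}} \le u_{p+r}$. Applying \Cref{thm:sr-mean} with $B = u_{p+r}$ to the errors appearing in $P_i$ gives $(1-u_{p+r})^{m_i} \le \EE(P_i) \le (1+u_{p+r})^{m_i}$. Since $1-u_{p+r} \in (0,1)$ and $m_i \le n-1$, this interval is contained in $[(1-u_{p+r})^{n-1},(1+u_{p+r})^{n-1}]$; as $1-(1-u_{p+r})^{n-1} \le (n-1)u_{p+r} \le (1+u_{p+r})^{n-1}-1 = \gamma_{n-1}(u_{p+r})$ by Bernoulli's inequality, both endpoints lie within $\gamma_{n-1}(u_{p+r})$ of $1$, so $\abs{\EE(P_i) - 1} \le \gamma_{n-1}(u_{p+r})$ for every $i$.

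Plugging this uniform bound into the triangle inequality, pulling the constant $\gamma_{n-1}(u_{p+r})$ out of the sum, and dividing by $\abs{y} = \abs{\sum_{i=1}^{n} a_i}$ gives
\[
\frac{\abs{\EE(\widehat y) - y}}{\abs{y}} \le \gamma_{n-1}(u_{p+r})\,\frac{\sum_{i=1}^{n}\abs{a_i}}{\abs{\sum_{i=1}^{n} a_i}} = \condnum{a}\,\gamma_{n-1}(u_{p+r}),
\]
which is~\eqref{eq:error_sum}. The step that requires care is the invocation of \Cref{thm:sr-mean} on the tail products $P_i$ with $i \ge 3$: its hypothesis $\EE(\delta_k \mid \delta_1,\dots,\delta_{k-1}) = \beta_k$ must be used while conditioning on the \emph{full} history of rounding errors and peeling factors off from the largest index downwards, exactly as in the proof of \Cref{thm:sr-mean}; conditioning only on the errors $\delta_{i-1},\dots,\delta_{k-1}$ that actually appear in $P_i$ would not recover $\beta_k$. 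Everything else is a routine substitution; in particular the $i=1$ and $i=2$ summands share the same $(n-1)$-factor product, so no term exceeds $n-1$ factors.
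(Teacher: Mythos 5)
Your proof is correct and follows essentially the same route as the paper's: expand $\widehat y$ via \eqref{eq:summation}, use linearity and the triangle inequality, and bound each $\abs{\EE(P_i)-1}$ by invoking \Cref{thm:sr-mean} with $B=u_{p+r}$ before pulling out $\gamma_{n-1}(u_{p+r})$ and dividing by $\abs{y}$. You are in fact slightly more careful than the paper in two spots — making explicit (via Bernoulli's inequality) that the lower deviation $1-(1-u_{p+r})^{n-1}$ is also dominated by $\gamma_{n-1}(u_{p+r})$, and noting that \Cref{thm:sr-mean} must be applied to the tail products while conditioning on the full error history — but these are refinements of the same argument, not a different one.
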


\begin{proof}
    From \eqref{eq:summation}, we have
    \begin{align*}
        \abs{\EE(\widehat y ) -y}
        &= \abs{\EE\biggl(\sum_{i=1}^{n} a_i \prod_{k= \max\{i,2\}}^{n} (1+\delta_{k-1}) \biggr) - \sum_{i=1}^{n} a_i}\\
        &= \abs{\sum_{i=1}^{n} a_i \left(\EE\Biggl( \prod_{k= \max\{i,2\}}^{n} (1+\delta_{k-1}) \Biggr) -1\right)} && \text{by linearity}\\
        &\leq \sum_{i=1}^{n} \abs{a_i} \abs{\EE\left( \prod_{k= \max\{i,2\}}^{n} (1+\delta_{k-1}) \right) -1} && \text{by triangle inequality} \\
        &\leq \sum_{i=1}^{n} \abs{a_i} \bigl((1+u_{p+r})^{n-\max\{i,2\}+1} -1 \bigr) && \text{by \Cref{thm:sr-mean}} \\
        &\leq \sum_{i=1}^{n} \abs{a_i} \bigl((1+u_{p+r})^{n-1} -1 \bigr)\\
        &= \sum_{i=1}^{n} \abs{a_i} \gamma_{n-1}(u_{p+r}).
    \end{align*}
    We thus have
    \begin{align*}
        \frac{\abs{\EE(\widehat y) - y}}{\abs{y}} \leq \condnum{a}\gamma_{n-1}(u_{p+r}).
    \end{align*}
\end{proof}

We now turn to give a probabilistic bound for the relative error of the summation~\eqref{eq:summation} under $\SR_{p,r}$ using martingales and \Cref{lem:azuma}.

\begin{theorem}
    \label{thm:proba-sum}
    For any $0 < \lambda < 1$, the quantity $\widehat y$ in \eqref{eq:summation} satisfies
    \begin{equation}
    \label{eq:proba-error_sum}
     \frac{\abs{\widehat y - y}}{\abs{y}}
     \leq \condnum{a} \left(\sqrt{u_p \gamma_{2(n-1)}(u_p)} \sqrt{\ln (2 / \lambda)} + \gamma_{n-1}(u_p +u_{p+r}) -\gamma_{n-1}(u_p)\right),
    \end{equation}
    with probability at least $1-\lambda$.
\end{theorem}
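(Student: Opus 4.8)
The plan is to split $\widehat y - y$ into a mean-zero random part, which I can control with a martingale and \Cref{lem:azuma}, and a deterministic drift coming from the truncation errors $\beta_{k-1}$. First I would apply \Cref{lem:main-result} term by term in \eqref{eq:summation}: after reindexing $j = k-1$, the factor attached to $a_i$ is $\prod_{j=\max\{i,2\}-1}^{n-1}(1+\delta_j)$, a product over an index set of at most $n-1$ elements, and \Cref{lem:main-result} rewrites it as $\prod_j(1+\alpha_j) + \mathcal{B}$ with $\alpha_j = \delta_j - \beta_j$ and $\abs{\mathcal{B}}\le\gamma_{n-1}(u_p+u_{p+r})-\gamma_{n-1}(u_p)$, using that $m\mapsto\gamma_m(u_p+u_{p+r})-\gamma_m(u_p)$ is increasing and that each product has at most $n-1$ factors. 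Summing over $i$ gives $\widehat y = \widetilde y + R$, where $\widetilde y = \sum_{i=1}^n a_i\prod_{k=\max\{i,2\}}^n(1+\alpha_{k-1})$ is the summation ``driven by the $\alpha_j$'s'' and $\abs{R}\le\bigl(\sum_i\abs{a_i}\bigr)\bigl(\gamma_{n-1}(u_p+u_{p+r})-\gamma_{n-1}(u_p)\bigr)$; dividing by $\abs{y}$ produces exactly the deterministic term of \eqref{eq:proba-error_sum}.

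Next I would bound $\widetilde y - y$ by building a martingale. Telescoping the recurrences $\widetilde s_k = (\widetilde s_{k-1}+a_k)(1+\alpha_{k-1})$ and $s_k = s_{k-1}+a_k$, the error $E_k = \widetilde s_k - s_k$ satisfies $E_1 = 0$ and $E_k - E_{k-1} = (\widetilde s_{k-1}+a_k)\alpha_{k-1}$, so $\widetilde y - y = E_n = \sum_{\ell=1}^{n-1}(\widetilde s_\ell + a_{\ell+1})\alpha_\ell$. I would then set $M_0 = 0$ and $M_j = \sum_{\ell=1}^{j}(\widetilde s_\ell + a_{\ell+1})\alpha_\ell$ for $1\le j\le n-1$, so that $M_{n-1} = \widetilde y - y$. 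Since $\widetilde s_\ell$ is a function of $\alpha_1,\ldots,\alpha_{\ell-1}$ and the $\alpha_\ell$ are mean independent with $\EE(\alpha_\ell) = 0$ by \Cref{lem:main-result}, one gets $\EE(M_j\mid\alpha_1,\ldots,\alpha_{j-1}) = M_{j-1}$, so $M_0,\ldots,M_{n-1}$ is a martingale with respect to $\alpha_1,\ldots,\alpha_{n-1}$. This is the crux of the argument, and the reason the $\alpha$-decomposition is needed: it is the mean independence of the $\alpha_\ell$, not of the biased $\delta_\ell$, that makes the martingale property hold.

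Finally I would bound the increments and invoke \Cref{lem:azuma}. From $\abs{\alpha_\ell}\le u_p$ and $\abs{\widetilde s_\ell}\le(1+u_p)^{\ell-1}\sum_{i=1}^{\ell}\abs{a_i}$ one gets $\abs{M_j - M_{j-1}} = \abs{\widetilde s_j + a_{j+1}}\,\abs{\alpha_j}\le a_j'$ with $a_j' = (1+u_p)^{j-1}u_p\sum_{i=1}^n\abs{a_i}$. Summing the geometric series and using $(1+u_p)^2-1\ge 2u_p$ gives $\sum_{j=1}^{n-1}(a_j')^2\le\frac{u_p}{2}\bigl(\sum_i\abs{a_i}\bigr)^2\gamma_{2(n-1)}(u_p)$, so \Cref{lem:azuma} yields $\abs{\widetilde y - y}\le\bigl(\sum_i\abs{a_i}\bigr)\sqrt{u_p\gamma_{2(n-1)}(u_p)}\sqrt{\ln(2/\lambda)}$ with probability at least $1-\lambda$. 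Dividing by $\abs{y}$, recalling $\condnum{a}=\sum_i\abs{a_i}/\abs{y}$, and combining with the deterministic bound on $R$ via the triangle inequality gives \eqref{eq:proba-error_sum}. The main obstacle is really just massaging the martingale increments into the geometric form that collapses to $\gamma_{2(n-1)}(u_p)$; the rest is bookkeeping. As a consistency check, letting $r\to\infty$ kills $u_{p+r}$, the $R$-term vanishes, and we recover the known $\SR_p$ summation bound $\condnum{a}\sqrt{u_p\gamma_{2(n-1)}(u_p)}\sqrt{\ln(2/\lambda)}$.
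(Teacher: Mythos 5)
Your proposal is correct and follows essentially the same route as the paper: decompose each product via \Cref{lem:main-result} into a mean-independent $\alpha$-part plus a bias term $\mathcal{B}_i$, bound $\sum_i a_i\mathcal{B}_i$ deterministically, and control the $\alpha$-part with Azuma--Hoeffding. The only difference is that you carry out the Ipsen--Zhou-style martingale construction and the increment/geometric-series computation explicitly, whereas the paper outsources that step to the cited literature; your details check out.
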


\begin{proof}
    For all $1\leq i \leq n$, denote $\mathcal I_i = \{k \in \mathbb{N} : \max\{i,2\} \le k \le n\}$. \Cref{lem:main-result} shows that the random variables $\alpha_1, \alpha_2,\ldots, \alpha_{n-1}$, with $\alpha_j = \delta_j - \beta_j$ are mean independent and
    $$
        \prod_{k= \max\{i,2\}}^{n} (1+\delta_{k-1}) = \prod_{k= \max\{i,2\}}^{n} (1+\alpha_{k-1} + \beta_{k-1}) = \prod_{k= \max\{i,2\}}^{n} (1+\alpha_{k-1}) +  \mathcal{B}_i.
    $$
    We thus have
\begin{equation}\label{eq:two-pieces}
\begin{aligned}
    \abs{\widehat y - y} &= \abs{\sum_{i=1}^{n} a_i \left(\prod_{k= \max\{i,2\}}^{n} (1+\delta_{k-1}) -1 \right)}\\
    &= \abs{\sum_{i=1}^{n} a_i \left(\prod_{k= \max\{i,2\}}^{n} (1+\alpha_{k-1}) +  \mathcal{B}_i -1\right) }\\
    &\leq \abs{\sum_{i=1}^{n} a_i \left(\prod_{k= \max\{i,2\}}^{n} (1+\alpha_{k-1}) -1\right)} + \abs{\sum_{i=1}^{n} a_i \mathcal{B}_i}\\
    &= \abs{M} + \abs{A},
\end{aligned}
\end{equation}
where $M = \sum_{i=1}^{n} a_i (\prod_{k= \max\{i,2\}}^{n} (1+\alpha_{k-1}) -1)$, and $A= \sum_{i=1}^{n} a_i \mathcal{B}_i$.
\textcolor{blue}{Since the $\alpha_{k-1}$ are mean independent and bounded in magnitude by $u_p$, $M$ can form a martingale as previously established in the literature~\cite{esop23a,ipzh20}. We now obtain a probabilistic bound on this martingale that is proportional to $\sqrt{n}u_p$. We will only outline our chosen construction here, referring the reader to~\cite[chap.~4]{thesisarar} for a summary of possible methods.}

By~\Cref{lem:azuma}, we obtain the bound
\begin{equation}\label{eq:mart-bound}
   \abs{M} \leq \abs{\sum_{i=1}^{n} a_i} \sqrt{u_p \gamma_{2(n-1)}(u_p)} \sqrt{\ln (2 / \lambda)},
\end{equation}
which holds with probability at least $1-\lambda$. Let us bound the second term in~\eqref{eq:two-pieces}. For all $1\leq i \leq n$, because $\mathcal I_i \subset \mathcal I_2$, $\abs{\mathcal{B}_i} \leq \abs{\mathcal{B}_2}$ and \Cref{lem:main-result} shows that
\begin{align*}
      \abs{\mathcal{B}_i} &\leq \abs{\mathcal{B}_2}
     \leq \gamma_{n-1}(u_p +u_{p+r}) -\gamma_{n-1}(u_p).
\end{align*}
It follows that
\begin{align}
\label{eq:A}
    \abs{A}  = \abs{\sum_{i=1}^{n} a_i \mathcal{B}_i}
    \leq  \sum_{i=1}^{n} \abs{a_i} \abs{\mathcal{B}_i}
    \leq  \left(\sum_{i=1}^{n} \abs{a_i}\right)  \left( \gamma_{n-1}(u_p +u_{p+r}) -\gamma_{n-1}(u_p)\right).
\end{align}
Therefore, the inequality
$$ \frac{\abs{\widehat y - y}}{\abs{y}}  \leq \condnum{a} \left(\sqrt{u_p \gamma_{2(n-1)}(u_p)} \sqrt{\ln (2 / \lambda)} + \gamma_{n-1}(u_p +u_{p+r}) -\gamma_{n-1}(u_p)\right)
$$
holds with probability at least $1-\lambda$.
\end{proof}

In the following, we give a probabilistic bound on the relative error of the summation~\eqref{eq:summation} under $\SR_{p,r}$ using the bound on the variance proposed in~\cite[Lem.~3.1]{esop23a} and \Cref{lem:bien-cheb-ineq}.

\begin{theorem}
    \label{thm:bc-proba-sum}
    For any $0 < \lambda < 1$, the quantity $\widehat y$ in \eqref{eq:summation} satisfies
    \begin{equation}
    \label{eq:bc-proba-error_sum}
     \frac{\abs{\widehat y - y}}{\abs{y}} \leq \condnum{a} \left(\sqrt{\gamma_{n-1}(u_p^2) / \lambda} + \gamma_{n-1}(u_p +u_{p+r}) -\gamma_{n-1}(u_p)\right),
    \end{equation}
    with probability at least $1-\lambda$.
\end{theorem}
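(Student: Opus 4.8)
The plan is to run the argument in the proof of \Cref{thm:proba-sum} essentially unchanged up to the point where a bound on $\abs{M}$ is needed, and there to replace the Azuma--Hoeffding step by a variance estimate followed by the Bienaymé--Chebyshev inequality. Concretely, I would first reuse the splitting \eqref{eq:two-pieces}: setting $\alpha_j = \delta_j - \beta_j$ and invoking \Cref{lem:main-result}, one has $\abs{\widehat y - y} \le \abs{M} + \abs{A}$, where $M = \sum_{i=1}^n a_i\bigl(\prod_{k=\max\{i,2\}}^n(1+\alpha_{k-1}) - 1\bigr)$ and $A = \sum_{i=1}^n a_i\mathcal B_i$, together with the deterministic estimate \eqref{eq:A}, namely $\abs{A} \le \bigl(\sum_{i=1}^n \abs{a_i}\bigr)\bigl(\gamma_{n-1}(u_p+u_{p+r}) - \gamma_{n-1}(u_p)\bigr)$, which holds with probability one.

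For the term $M$, I would first note that $\EE(M) = 0$: by \Cref{lem:main-result} the $\alpha_{k-1}$ are mean independent with zero mean, and (as in the proof of that lemma) $\abs{\alpha_{k-1}} \le u_p$, so each factor-product $\prod_{k=\max\{i,2\}}^n(1+\alpha_{k-1})$ has expectation $1$ --- the identity underlying \cite[Lem.~6.1]{chm21} --- whence $\EE(M)=0$ and $\VV(M)=\EE(M^2)$. Since $M$ is precisely the recursive-summation error expression driven by the mean-independent, zero-mean, $u_p$-bounded random variables $\alpha_{k-1}$ in place of the ideal $\SR_p$ errors, the variance bound of \cite[Lem.~3.1]{esop23a} applies and gives $\VV(M) \le \bigl(\sum_{i=1}^n\abs{a_i}\bigr)^2\gamma_{n-1}(u_p^2)$.

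I would then apply \Cref{lem:bien-cheb-ineq} to $M$ with $\alpha = 1/\sqrt\lambda$, obtaining $\mathbb P\bigl(\abs{M} \le \bigl(\sum_{i=1}^n\abs{a_i}\bigr)\sqrt{\gamma_{n-1}(u_p^2)/\lambda}\bigr) \ge 1-\lambda$. On this event, adding the almost-sure bound on $\abs{A}$ and dividing by $\abs{y}$, the definition \eqref{eq:cond-num-sum} of $\condnum{a}$ yields
$$\frac{\abs{\widehat y - y}}{\abs{y}} \le \condnum{a}\Bigl(\sqrt{\gamma_{n-1}(u_p^2)/\lambda} + \gamma_{n-1}(u_p+u_{p+r}) - \gamma_{n-1}(u_p)\Bigr)$$
with probability at least $1-\lambda$, which is \eqref{eq:bc-proba-error_sum}. (Intersecting the probability-one event for $A$ with the probability-$(1-\lambda)$ event for $M$ costs nothing.)

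The part I expect to require the most care is the appeal to \cite[Lem.~3.1]{esop23a}: one must check that its hypotheses --- mean independence, zero mean, and the magnitude bound $u_p$ --- are met by the $\alpha_{k-1}$ rather than by genuine $\SR_p$ errors, and that its conclusion is (or can be restated) exactly in the form $\VV \le \bigl(\sum\abs{a_i}\bigr)^2\gamma_{n-1}(u_p^2)$ for the summation error. All three hypotheses are delivered by \Cref{lem:main-result} and its proof, so this is bookkeeping rather than a genuine obstacle; the rest is a line-by-line re-run of the summation analysis with the Chebyshev tail in place of Azuma--Hoeffding, which also explains why the factor $\sqrt{\ln(2/\lambda)}$ becomes $1/\sqrt\lambda$ and the increment term $\sqrt{u_p\gamma_{2(n-1)}(u_p)}$ is replaced by the somewhat smaller $\sqrt{\gamma_{n-1}(u_p^2)}$.
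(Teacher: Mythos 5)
Your proposal matches the paper's proof essentially line for line: the paper also reuses the decomposition \eqref{eq:two-pieces} and the bound \eqref{eq:A} on $\abs{A}$, then invokes the mean independence and magnitude bound $\abs{\alpha_{k-1}}\le u_p$ to apply the variance bound of \cite[Lem.~3.1]{esop23a} together with \Cref{lem:bien-cheb-ineq}, yielding $\abs{M}\le\bigl(\sum_{i=1}^n\abs{a_i}\bigr)\sqrt{\gamma_{n-1}(u_p^2)/\lambda}$ with probability at least $1-\lambda$. Your additional remarks on $\EE(M)=0$ and on checking the hypotheses of the cited lemma are correct bookkeeping that the paper leaves implicit.
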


\begin{proof}
    This proof relies on~\eqref{eq:two-pieces}. Since $\alpha_{k-1}$ are mean independent and satisfy $\abs{\alpha_{k-1}} \leq u_p$,~\cite[Lem.~3.1]{esop23a} and \Cref{lem:bien-cheb-ineq} show that
    \begin{equation}
    \label{eq:var-M}
        \abs{M} \leq  \left(\sum_{i=1}^{n} \abs{a_i} \right) \sqrt{\gamma_{n-1}(u_p^2)/\lambda}
    \end{equation}
    holds with probability at least $1-\lambda$. Using~\eqref{eq:A}, we conclude that
    $$ \frac{\abs{\widehat y - y}}{\abs{y}}  \leq \condnum{a} \left(\sqrt{\gamma_{n-1}(u_p^2)/\lambda} + \gamma_{n-1}(u_p +u_{p+r}) -\gamma_{n-1}(u_p)\right),
$$
with probability at least $1-\lambda$.
\end{proof}

\subsection{Computation of inner products}
\label{sub-sec:IP}

Let $a,b \in \mathbb{R}^n$. We will now extend the analysis of summation to the computation of the inner product
\begin{equation}\label{eq:IP}
y = \sum_{i=1}^{n} a_i b_i.
\end{equation}
Our analysis will rely on the condition number of the inner product \condnum{a \circ b}, where $\condnum{\cdot}$ is defined in~\eqref{eq:cond-num-sum} and  $a \circ b = (a_1 b_1,\ldots,a_n b_n)$ denotes the Hadamard product. If we denote $$
\left\{
\begin{aligned}
    \widehat{s}_{2k-1} &= \widehat{s}_{2k-2} + \SR_{p,r}(a_k b_k), \\
    \widehat{s}_{2k} &= \SR_{p,r}(\widehat{s}_{2k-1}),
\end{aligned}
\quad \text{for } k = 1, \ldots, n,
\right.
$$

we have
\vspace{0.2cm}
\begin{center}
	{\renewcommand{\arraystretch}{1.3}

		\begin{tabular}{| L{5.5cm}| L{3.5cm}| }
			\hline  Limited-precision SR & Exact computation  \\
			\hline    $\widehat{s}_1 = a_1 b_1  $ &  $s_1 = a_1 b_1 $  \\
			$\widehat{s}_{2}=\widehat{s}_{1}  (1+\delta_{1}) $ &  $s_{2} = s_{1}  $\\
			$\widehat{s}_{2k-1} = \widehat{s}_{2k-2} +a_{k} b_k(1+\delta_{2k-2}) $ &         $s_{2k-1} = s_{2k-2} +a_{k} b_k $\\
                $\widehat{s}_{2k} = \widehat{s}_{2k-1}(1+\delta_{2k-1}) $ &  $s_{2k} = s_{2k-1} $\\
			\hline  $\widehat{y}=\widehat{s}_{2n} $ &  $y=s_{2n} $\\
			\hline

		\end{tabular}
	}
\end{center}
\vspace{0.2cm}

It follows that
\begin{equation}
	\label{eq:IP-SR}
	\widehat{y} =  \sum_{i=1}^{n} a_ib_i (1+\delta_{2i-1}) \prod_{k=i}^n (1+\delta_{2(k-1)}),
\end{equation}
with $\delta_0 = 0$. Note that for $1\leq k \leq 2n-1$, one has $\abs{\delta_{k}} \leq u_p$ and $\abs{\EE(\delta_{k})}  = \abs{\EE(\beta_{k})}  \leq u_{p+r} $, where $\beta_{k}$ is defined analogously to \eqref{eq:sr-bias} as
\begin{equation}
\beta_{2k-1} = \frac{\fl_{p+r}(\widehat{s}_{2k}) -(\widehat{s}_{2k-1})}{\widehat{s}_{2k-1} } \quad \text{and} \quad \beta_{2k-2} = \frac{\fl_{p+r}(\widehat{s}_{2k-1}) -(\widehat{s}_{2k-2}+a_k b_k)}{\widehat{s}_{2k-2} +a_k b_k}.
\end{equation}
In the following theorem, we give a bound on the inner product bias in \Cref{eq:IP-SR}.

\begin{theorem}\label{thm:IP-expval}
    The quantity $\widehat y$ in Equation~\eqref{eq:IP-SR} satisfies
    \begin{equation}
    \label{eq:error_IP}
    \frac{\abs{\EE(\widehat y) - y}}{\abs{y}} \leq \condnum{a \circ b} \gamma_n(u_{p+r}), 
    \end{equation}
    where $\condnum{a \circ b}$ and $\gamma_n(u_{p+r})$ are defined in~\eqref{eq:cond-num-sum} and \eqref{eq:gamma_n_u}, respectively.
\end{theorem}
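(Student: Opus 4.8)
The plan is to follow the argument of \Cref{thm:sum-expval} almost verbatim; the only genuinely new point is that the products in \eqref{eq:IP-SR} run over non-consecutive blocks of the error variables $\delta_1,\dots,\delta_{2n-1}$. Starting from \eqref{eq:IP-SR}, I would take expectations and use linearity to write
\begin{equation*}
\EE(\widehat y) - y = \sum_{i=1}^{n} a_i b_i\bigl(\EE(P_i) - 1\bigr),
\qquad
P_i := (1+\delta_{2i-1})\prod_{k=i}^{n}(1+\delta_{2(k-1)}),
\end{equation*}
with $\delta_0 = 0$, and then apply the triangle inequality to get $\abs{\EE(\widehat y) - y} \le \sum_{i=1}^{n}\abs{a_i b_i}\,\abs{\EE(P_i) - 1}$.

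The core step is the bound on $\abs{\EE(P_i) - 1}$ for each fixed $i$. I would write $P_i = \prod_{\ell=1}^{m_i}(1+\delta_{j_\ell})$, where $j_1 < \dots < j_{m_i}$ are the indices that actually occur — the trivial factor $1+\delta_0$ being dropped when $i = 1$. Since every $1+\delta_{j_\ell}$ is positive, each partial product $\prod_{\ell < L}(1+\delta_{j_\ell})$ is nonnegative and measurable with respect to $\delta_1,\dots,\delta_{j_L-1}$, so the induction in the proof of \Cref{thm:sr-mean} applies unchanged to the sequence $\delta_{j_1},\dots,\delta_{j_{m_i}}$: peeling off the largest index, \Cref{lem:meanindp} contributes at each step a factor $1+\beta_{j_L}$ with $\abs{\beta_{j_L}}\le u_{p+r}$, and iterating yields $(1-u_{p+r})^{m_i}\le\EE(P_i)\le(1+u_{p+r})^{m_i}$. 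Because $(1-u_{p+r})^{m_i}+(1+u_{p+r})^{m_i}\ge 2$, this gives $\abs{\EE(P_i)-1}\le(1+u_{p+r})^{m_i}-1=\gamma_{m_i}(u_{p+r})$. Finally, counting factors — one from $(1+\delta_{2i-1})$, plus $n-i+1$ from the product over $k$, minus the vanishing $\delta_0$ term when $i=1$ — gives $m_1 = m_2 = n$ and $m_i = n-i+2 \le n$ for $i\ge 2$; hence $m_i\le n$ and $\abs{\EE(P_i)-1}\le\gamma_n(u_{p+r})$ for all $i$.

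Combining the estimates, $\abs{\EE(\widehat y) - y} \le \gamma_n(u_{p+r})\sum_{i=1}^{n}\abs{a_i b_i}$; dividing by $\abs{y} = \abs{\sum_{i=1}^{n}a_i b_i}$ and recalling the definition \eqref{eq:cond-num-sum} of $\condnum{a\circ b}$ produces \eqref{eq:error_IP}. I expect the only real obstacle to be the bookkeeping in the middle paragraph: one must confirm that the non-consecutive sub-collection of $\delta$'s extracted from $P_i$ still carries the chained mean-independence structure that \Cref{thm:sr-mean} relies on (this is exactly where the measurability of each partial product with respect to the full prefix $\delta_1,\dots,\delta_{j_L-1}$ is used), and one must count the nontrivial factors carefully — in particular remembering that the $i=1$ term carries $\delta_0 = 0$, so the worst case $m_i = n$ is attained at both $i=1$ and $i=2$. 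Everything else is the routine $\gamma$-function algebra already carried out for recursive summation.
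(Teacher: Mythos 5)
Your proof is correct and follows exactly the route the paper intends: the paper's own proof of \Cref{thm:IP-expval} is a one-line appeal to the argument of \Cref{thm:sum-expval} "with one additional error term from the multiplication," and you have simply supplied the details it omits. Your factor count ($m_i\le n$, attained at $i=1,2$, yielding $\gamma_n$ instead of $\gamma_{n-1}$) and your handling of the non-consecutive indices — conditioning on the full prefix so that the induction of \Cref{thm:sr-mean} still applies, using the positivity of the partial products — are precisely the points that need checking, and both are handled correctly.
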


\begin{proof}
    The proof follows the same structure as that of \Cref{thm:sum-expval},
    with one additional error term resulting from the multiplication $a_i b_i$.
\end{proof}

The following two theorems give probabilistic bounds on the relative error of the inner product~\eqref{eq:IP-SR} under $\SR_{p,r}$ using martingales and \Cref{lem:azuma} and the bound on the variance proposed in~\cite[Lem.~3.1]{esop23a} and \Cref{lem:bien-cheb-ineq}, respectively.

\begin{theorem}
    \label{thm:proba-IP}
    For any $0 < \lambda < 1$, the quantity $\widehat y$ in \eqref{eq:IP-SR} satisfies
    \begin{equation}
    \label{eq:proba-error_IP}
     \frac{\abs{\widehat y - y}}{\abs{y}}
     \leq \condnum{a \circ b} \left(\sqrt{u_p \gamma_{2n}(u_p)} \sqrt{\ln (2 / \lambda)} + \gamma_{n}(u_p +u_{p+r}) -\gamma_{n}(u_p)\right),
    \end{equation}
    with probability at least $1-\lambda$. 
\end{theorem}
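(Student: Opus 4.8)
The plan is to mirror the proof of \Cref{thm:proba-sum}, splitting the relative error into a martingale part controlled by the Azuma--Hoeffding inequality and a deterministic-bias part controlled by \Cref{lem:main-result}. Starting from the expansion~\eqref{eq:IP-SR}, for each $i$ I would collect the rounding errors appearing in the $i$th summand, namely $\delta_{2i-1}$ together with $\delta_{2(k-1)}$ for $k=i,\dots,n$ (recalling $\delta_0=0$), and set $\alpha_j=\delta_j-\beta_j$. \Cref{lem:main-result} — whose combinatorial core, \Cref{lem:technical}, applies to any finite index set, not just a contiguous block — then yields, for each $i$,
$$(1+\delta_{2i-1})\prod_{k=i}^{n}(1+\delta_{2(k-1)}) = \prod_{j}(1+\alpha_j) + \mathcal{B}_i,$$
where the $\alpha_j$ are mean independent with $\abs{\alpha_j}\le u_p$, and $\abs{\mathcal{B}_i}\le\gamma_{m_i}(u_p+u_{p+r})-\gamma_{m_i}(u_p)$ with $m_i$ the number of factors in the $i$th product. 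Summing over $i$ gives $\widehat y - y = M + A$, where $M=\sum_{i=1}^{n} a_ib_i\bigl(\prod_{j}(1+\alpha_j)-1\bigr)$ and $A=\sum_{i=1}^{n} a_ib_i\mathcal{B}_i$.

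For the martingale term: since the $\alpha_j$, $1\le j\le 2n-1$, are mean independent and bounded by $u_p$, and since the longest product chain occurring in~\eqref{eq:IP-SR} has $n$ factors (attained at $i=1,2$), the standard martingale construction of~\cite{ipzh20,esop23a} applied to $M$ together with \Cref{lem:azuma} gives
$$\abs{M} \le \Bigl(\,\textstyle\sum_{i=1}^{n}\abs{a_ib_i}\Bigr)\sqrt{u_p\,\gamma_{2n}(u_p)}\,\sqrt{\ln(2/\lambda)}$$
with probability at least $1-\lambda$. For the bias term: since $m_i\le n$ and $m\mapsto\gamma_m(u_p+u_{p+r})-\gamma_m(u_p)$ is increasing, $\abs{\mathcal{B}_i}\le\gamma_n(u_p+u_{p+r})-\gamma_n(u_p)$ for every $i$, hence $\abs{A}\le\bigl(\sum_{i=1}^{n}\abs{a_ib_i}\bigr)\bigl(\gamma_n(u_p+u_{p+r})-\gamma_n(u_p)\bigr)$ deterministically. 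Adding $\abs{M}+\abs{A}$, dividing by $\abs{y}=\abs{\sum_{i=1}^{n}a_ib_i}$, and recognizing $\condnum{a\circ b}$ from~\eqref{eq:cond-num-sum} yields~\eqref{eq:proba-error_IP}; letting $r\to\infty$ collapses the second term to $0$ and recovers the known $\mathcal{O}(\sqrt{n}\,u_p)$ bound for $\SR_p$.

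The algebraic manipulations are routine; the step needing care is the index bookkeeping, since the errors in a summand of~\eqref{eq:IP-SR} do not form a contiguous range as they do in the summation case — one must verify that \Cref{lem:main-result} still applies (it does, as its proof uses only the power-set expansion of \Cref{lem:technical}) and that the worst-case chain lengths feeding into $\gamma_{2n}$ and $\gamma_n$ are correctly identified, including the effect of the extra factor $(1+\delta_{2i-1})$ and of $\delta_0=0$. I expect the genuine obstacle to be making the martingale construction for $M$ fully rigorous in this nonconsecutive-index setting; invoking the construction of~\cite{ipzh20} (or the variance-based variant of~\cite{esop23a}, which would instead give the companion bound of \Cref{thm:bc-proba-sum}) sidesteps it, which is why this part of the proof is only sketched.
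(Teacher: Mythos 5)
Your proposal is correct and follows exactly the route the paper intends: the paper's own proof of this theorem is a one-line reference to the proof of \Cref{thm:proba-sum} ("same structure, with one additional error term from the multiplication $a_ib_i$"), and you have simply filled in that sketch — same $M+A$ decomposition, same use of \Cref{lem:main-result} for the $\mathcal{B}_i$ terms, same Azuma--Hoeffding bound on the martingale part, with the chain lengths ($n$ factors, attained at $i=1,2$ thanks to $\delta_0=0$) correctly identified.
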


\begin{proof}
    The proof follows the same structure as that of \Cref{thm:proba-sum},
    with one additional error term resulting from the multiplication $a_i b_i$.
\end{proof}

\begin{theorem}
    \label{thm:bc-proba-IP}
    For any $0 < \lambda < 1$, the quantity $\widehat y$ in \eqref{eq:IP-SR} satisfies
    \begin{equation}
    \label{eq:bc-proba-error_ip}
     \frac{\abs{\widehat y - y}}{\abs{y}} \leq \condnum{a \circ b} \left(\sqrt{\gamma_{n}(u_p^2) / \lambda} + \gamma_{n}(u_p +u_{p+r}) -\gamma_{n}(u_p)\right),
    \end{equation}
    with probability at least $1-\lambda$.
\end{theorem}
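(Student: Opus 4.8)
The plan is to follow the proof of \Cref{thm:bc-proba-sum} almost verbatim, the only structural change being that in an inner product each summand carries one extra rounding factor, coming from the product $a_ib_i$; this is exactly the same discrepancy that distinguishes \Cref{thm:proba-IP} from \Cref{thm:proba-sum}. Starting from the expansion~\eqref{eq:IP-SR}, I set $\alpha_k=\delta_k-\beta_k$ for every rounding error occurring there and invoke \Cref{lem:main-result}: the $\alpha_k$ are mean independent, $\abs{\alpha_k}\le u_p$, and, applying the product decomposition of that lemma to the finite set of error indices that appear in the $i$th summand,
\[
  (1+\delta_{2i-1})\prod_{k=i}^{n}(1+\delta_{2(k-1)})
  =(1+\alpha_{2i-1})\prod_{k=i}^{n}(1+\alpha_{2(k-1)})+\mathcal B_i .
\]
Substituting this into $\widehat y-y$ and separating the $\mathcal B_i$ contributions, exactly as in~\eqref{eq:two-pieces}, yields $\abs{\widehat y-y}\le\abs{M}+\abs{A}$ with
\[
  M=\sum_{i=1}^{n}a_ib_i\Bigl((1+\alpha_{2i-1})\prod_{k=i}^{n}(1+\alpha_{2(k-1)})-1\Bigr),
  \qquad
  A=\sum_{i=1}^{n}a_ib_i\,\mathcal B_i .
\]

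For the deterministic term $A$, I would note that the $i$th summand of~\eqref{eq:IP-SR} involves at most $n$ rounding errors (the vacuous factor $1+\delta_0$ is discarded when $i=1$) and that $m\mapsto(1+u_p+u_{p+r})^{m}-(1+u_p)^{m}$ is increasing; hence \Cref{lem:main-result} gives $\abs{\mathcal B_i}\le\gamma_{n}(u_p+u_{p+r})-\gamma_{n}(u_p)$ for every $i$, so that $\abs{A}\le\bigl(\sum_{i=1}^{n}\abs{a_ib_i}\bigr)\bigl(\gamma_{n}(u_p+u_{p+r})-\gamma_{n}(u_p)\bigr)$, mirroring~\eqref{eq:A}. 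For the stochastic term $M$, the $\alpha_k$ are mean independent with $\abs{\alpha_k}\le u_p$, so $M$ is of exactly the form treated in~\cite[Lem.~3.1]{esop23a}; applying that variance bound as in the derivation of~\eqref{eq:var-M}, with $n$ in place of $n-1$ because of the extra product factor, and then \Cref{lem:bien-cheb-ineq} (with the parameter chosen so that the failure probability is $\lambda$), gives $\abs{M}\le\bigl(\sum_{i=1}^{n}\abs{a_ib_i}\bigr)\sqrt{\gamma_{n}(u_p^2)/\lambda}$ with probability at least $1-\lambda$. Combining the two bounds, dividing by $\abs{y}$, and using $\condnum{a\circ b}=\bigl(\sum_{i=1}^{n}\abs{a_ib_i}\bigr)/\abs{y}$ gives~\eqref{eq:bc-proba-error_ip}.

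The main difficulty is bookkeeping rather than anything conceptual. One has to verify carefully that the set of $\alpha$-indices in the $i$th summand, namely $\{2i-1\}\cup\{2(k-1):i\le k\le n\}$, has cardinality at most $n$, so that the uniform bound on $\abs{\mathcal B_i}$ and the subscript $n$ in the $\gamma$ terms are the correct ones, and that the rearranged products $(1+\alpha_{2i-1})\prod_{k}(1+\alpha_{2(k-1)})$ still satisfy the hypotheses of~\cite[Lem.~3.1]{esop23a}; the extra factor $1+\alpha_{2i-1}$ does not affect mean independence, which is precisely what \Cref{lem:main-result} guarantees. As a consistency check, letting $r\to\infty$ sends $u_{p+r}\to0$, annihilates the term $\gamma_{n}(u_p+u_{p+r})-\gamma_{n}(u_p)$, and recovers the $\SR_p$ inner-product bound of~\cite[Lem.~3.1]{esop23a}.
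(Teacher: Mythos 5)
Your proof is correct and follows exactly the route the paper intends: the paper's own proof of this theorem is just the remark that it mirrors \Cref{thm:bc-proba-sum} with one extra rounding factor from the product $a_ib_i$, and you have carried out precisely that adaptation, with the right bookkeeping (at most $n$ error factors per summand, hence the subscript $n$ in the $\gamma$ terms and in the variance bound). No gaps.
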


\begin{proof}
    The proof follows the same structure as that of \Cref{thm:bc-proba-sum},
    with one additional error term resulting from the multiplication $a_i b_i$.
\end{proof}

\begin{remark}
\label{rem:sr-main}
    We have shown the applicability of~\Cref{lem:main-result} to summation and inner product. These results \textcolor{blue}{should} hold for all algorithms considered for $\SR_p$ that satisfy a probabilistic bound in terms of $\mathcal{O}(\sqrt{n}u_p)$, \textcolor{blue}{in particular, algorithms with multi-linear errors~\cite[sect.~4]{thesisarar}. We plan to explore this in future work.}
\end{remark}

\section{Bound analysis}
\label{sec:bound-analysis}

We analyze the bound in \Cref{thm:proba-IP}; the same analysis holds for \Cref{thm:bc-proba-IP} and the probabilistic bounds in \Cref{thm:proba-sum} and \Cref{thm:bc-proba-sum}.

Let $f: \mathbb{R}^2 \rightarrow \mathbb{R}$ be twice continuously differentiable at $(0,0)$. By Taylor's theorem, we have
\begin{equation}\label{eq:multi-taylor}
f(x, y) = f(0, 0) + \frac{\partial f}{\partial x}(0, 0) x + \frac{\partial f}{\partial y}(0, 0) y + \mathcal{O}(\norm{(x,y)}_2).
\end{equation}
By using~\eqref{eq:multi-taylor}, we obtain
\begin{equation}\label{eq:bound1}
\begin{aligned}
    f(u_p,u_{p+r}) &:= \gamma_n(u_p + u_{p+r}) 
    = (1 + u_p + u_{p+r})^{n} - 1 \\
    &= n (u_p + u_{p+r}) + \mathcal{O}(\norm{(u_p, u_{p+r})}_2).
\end{aligned}
\end{equation}
Combining~\eqref{eq:bound1} with
\begin{equation*}
\gamma_n(u_p + u_{p+r}) = (1 + u_p + u_{p+r})^{n} - 1 = n (u_p + u_{p+r}) + \mathcal{O}(\norm{(u_p, u_{p+r})}_2)
\end{equation*}
and
\begin{equation*}
\sqrt{u_p \gamma_{2n}(u_p)} \sqrt{\ln (2 / \lambda)} = \sqrt{2n} \sqrt{\ln (2 / \lambda)} u_p + \mathcal{O}(u_p^2),
\end{equation*}
we obtain
\begin{align*}
    \frac{\abs{\widehat y - y}}{\abs{y}}  &\leq \condnum{a \circ b} \left(\sqrt{u_p \gamma_{2n}(u_p)} \sqrt{\ln (2 / \lambda)} + \gamma_{n}(u_p +u_{p+r}) -\gamma_{n}(u_p)\right)\\
    &= \condnum{a \circ b} \left(\sqrt{2n} \sqrt{\ln (2 / \lambda)} u_p + n (u_p + u_{p+r}) - n u_p + \mathcal{O}(\norm{(u_p, u_{p+r})}_2)  \right)\\
    &= \condnum{a \circ b} \left(\sqrt{2n} \sqrt{\ln (2 / \lambda)} u_p + n u_{p+r}\right) + \mathcal{O}(\norm{(u_p, u_{p+r})}_2).
\end{align*}

This result indicates that the bound on $\SR_{p,r}$ with fixed $\lambda$ is made up of two components: $\sqrt{n} u_p$, a probabilistic term that captures the random behavior of the algorithm, and $n u_{p+r}$, a deterministic one that captures the truncations performed in precision $p+r$. As the number of random bits $r$ increases, the bound becomes tighter as the magnitude of $u_{p+r}$ decreases. Moreover, the probabilistic bounds converge asymptotically to the probabilistic bound on $\SR_p$, which is consistent with~\Cref{rem:sr_r-cv-to-sr}.

\begin{remark}
    A good rule of thumb is to pick a value of $r$ that ensures that the term containing $\sqrt{n}u_p$ is not dominated by the one containing $n u_{p+r}$. Simplifying the two terms leads to $r\geq \lceil (\log_2n) / 2 \rceil$, where $\lceil x \rceil$ is the smallest integer greater or equal than $x\in\mathbb{R}$. \textcolor{blue}{Since our~\cref{lem:main-result} should be applicable to all algorithms with multi-linear errors, we expect this rule of thumb to work for all such algorithms.}
    \Cref{sec:experiments} looks at various numerical examples where such a value of $r$ is indeed a sensible choice.
\end{remark}

\section{Numerical experiments}\label{sec:experiments}
We perform a set of numerical experiments by considering computations that are prone to stagnation with RN, and we investigate the effect that the value of $r$ has when $\SR$ is used. The first two sets of experiments use the \texttt{srfloat} C++ library\footnote{\href{https://github.com/sfilip/srfloat}{https://github.com/sfilip/srfloat}}, also available with Python bindings, which simulates $\SR_{p,r}$ arithmetic as described in \Cref{sec:SR}. Internally, the software uses \texttt{binary64} arithmetic, and it can handle values of $p$ up to $53$ and values of $r$ up to $53-p$ for a chosen $p$. The neural network training example relies on the \texttt{mptorch}\footnote{\href{https://github.com/mptorch/mptorch}{https://github.com/mptorch/mptorch}} library, a PyTorch extension with similar $\SR_{p,r}$ simulation functionality for deep learning computations.

\subsection{Summation}

\begin{figure}[t]
  \centering
  \footnotesize
    \begin{tikzpicture}[trim axis group left,
        trim axis group right]
      \begin{groupplot}[
        group style={
          group size=2 by 1,
        },
        ymode=log,
        width=2.5in,
        grid=major,
        every axis plot/.append style={mark repeat=5,
            line width=\mylinewidth},
        ]

        \nextgroupplot[
        title={Relative forward error},
        xlabel = {$n$},
        ymax=1000,
        ymin=0.000001,
        cycle list name = list_fig2
        ]

        \addplot table [x=n, y=Error_rn, col sep=comma] {errs_rn.csv};

        \addplot table [x=n, y=Error_sr_3, col sep=comma] {errs_sr_3.csv};

        \addplot table [x=n, y=Error_sr_6, col sep=comma] {errs_sr_6.csv};

        \addplot table [x=n, y=Error_sr_7, col sep=comma] {errs_sr_7.csv};

        \addplot table [x=n, y=Error_sr_8, col sep=comma] {errs_sr_8.csv};

        \addplot table [x=n, y=Error_sr_10, col sep=comma] {errs_sr_10.csv};

        \nextgroupplot[
        title={Bounds with $1-\lambda=0.9$},
        xlabel = {$n$},
        ymax=1000,
        ymin=0.000001,
        cycle list name = list_fig2
        ]

        \addplot table [x=n, y=bound_rn, col sep=comma] {bound_rn.csv};

        \addplot table [x=n, y=bound_sr_3, col sep=comma] {bound_sr_3.csv};

        \addplot table [x=n, y=bound_sr_6, col sep=comma] {bound_sr_6.csv};

        \addplot table [x=n, y=bound_sr_7, col sep=comma] {bound_sr_7.csv};

        \addplot table [x=n, y=bound_sr_8, col sep=comma] {bound_sr_8.csv};

        \addplot table [x=n, y=bound_sr_10, col sep=comma] {bound_sr_10.csv};

      \end{groupplot}
    \end{tikzpicture}

    \smallskip

    \begin{tikzpicture}[trim axis left, trim axis right]
      \begin{axis}[
        title = {},
        legend columns=3,
        scale only axis,
        width=1mm,
        hide axis,
        /tikz/every even column/.append style={column sep=0.6cm},
        legend style={at={(0,0)},anchor=center,draw=none,
          legend cell align={left},cells={line width=\mylinewidth}},
        legend image post style={sharp plot},
        legend cell align={left},
        cycle list name = list_fig2
        ]
        \addplot (0,0);
        \addplot (0,0);
        \addplot (0,0);
        \addplot (0,0);
        \addplot (0,0);
        \addplot (0,0);
        \legend{RN, $\SR_{11,3}$, $\SR_{11,6}$, $\SR_{11,7}$, $\SR_{11,8}$, $\SR_{11,10}$};
      \end{axis}
    \end{tikzpicture}
  \caption{Left: relative error of RN and $\SR_{11,r}$ in IEEE-754 binary16 arithmetic ($p=11$) for the recursive summation of $n$ floating-point numbers drawn from a uniform distribution between 0 and 1. For each value of $n$, the reported relative error for $\SR_{11,r}$ is the average value over $500$ runs. Right: comparison of deterministic bound and probabilistic bounds \textup{(}\Cref{thm:bc-proba-sum}\textup{)} with the associated random bits.}
  \label{fig:cond-1}
\end{figure}
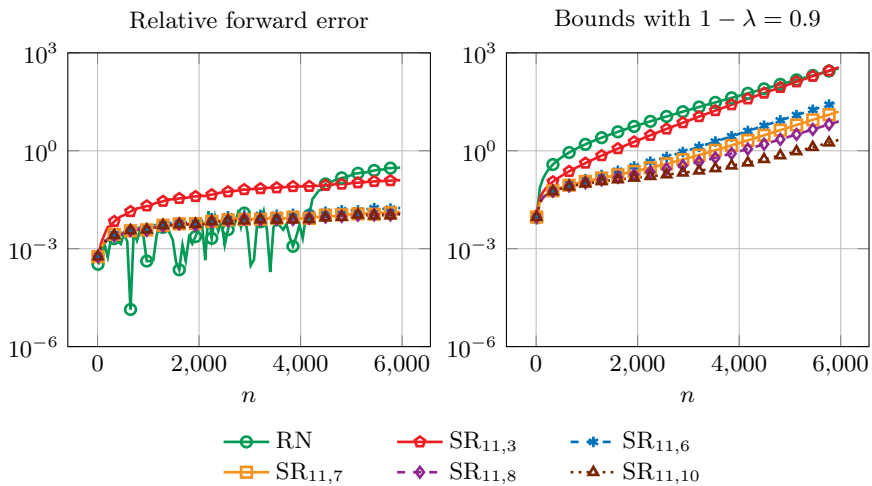

We first look at the effect of $r$ when recursively summing $n$ floating-point numbers, for $n$ between 2 and 6,000. The addends are sampled uniformly at random from the interval $[0,1]$, and in this experiment we use \texttt{binary16} arithmetic, for which $p = 11$. The results are reported in~\Cref{fig:cond-1}. The left panel shows relative errors when using RN and various $\SR_{11,r}$ rounding modes. While RN is superior for smaller $n$, as soon as the running sum becomes sufficiently large, new addends will tend to get absorbed, leading to stagnation. The biased errors that are introduced this way accumulate, leading to a significant increase in the overall error. While performing worse than RN in the beginning, the $\SR_{11,r}$ configurations are less affected by stagnation as $n$ increases. Taking $r$ close to $\lceil (\log_2 6{,}000) / 2\rceil=7$ gives good results, with larger values of $r$ not showing any significant improvements.

The figure on the right corroborates the findings presented in \Cref{sec:bound-analysis} and shows that the probabilistic bounds are tighter than deterministic ones. We use the bound obtained with the variance bound and the Bienaymé--Chebyshev inequality because, with probability $0.9$, this method ensures tight bounds for a larger $n$. For more details on the comparison between the bounds in \cref{thm:proba-sum} and \Cref{thm:bc-proba-sum}, we refer the reader to~\cite[sect 4.3]{thesisarar}.
The effect of $r$ is also evident in this figure.

\subsection{Rosenbrock function}
The Rosenbrock function is a non-convex function defined by
$$f(x_1,x_2)=(1-x_1)^2 + 100(x_2-x_1^2)^2,$$
with a global minimum of $0$, occurring at $\mathbf{x}^\star=(1,1)$. This function is often used to assess the performance of mathematical optimization algorithms. In this experiment, we look at the convergence of gradient descent to this minimum for RN and $\SR_{p,r}$. The update rule of gradient descent is
$$
\mathbf{x}_{k+1} = \mathbf{x}_k - t_k \nabla f(\mathbf{x}_k),
$$
where $\mathbf{x}_k$ is the current point, $t_k$ is the learning rate, and $\nabla f(\mathbf{x}_k)$ is the gradient of the function at $\mathbf{x}_k$.
\textcolor{blue}{
We assume gradient computation using \texttt{binary64} with RN (gradients are then downcast to \texttt{binary16}) and apply $\SR_{p,r}$ only during the parameter update process.
}

\begin{figure}[t]
  \begin{center}
  \footnotesize
    \begin{tikzpicture}[trim axis group left,
        trim axis group right]
      \begin{groupplot}[
        group style={
          group size=2 by 1,
        },
        ymode=log,
        ymin = 1e-3,
        ymax = 1,
        width=2.5in,
        grid=major,
        every axis plot/.append style={mark repeat=5, line width=\mylinewidth}
        ]

        \nextgroupplot[
        ylabel={$|f(\mathbf{x}_k)-f(\mathbf{x}^\star)|$},
        title={$\mathbf{x}_0 = (0, 0)$},
        xlabel = {$k$},
        cycle list name = list_fig3
        ]

        \addplot table [x=n, y=min_binary-64, col sep=comma] {min_binary-64.csv};

        \addplot table [x=n, y=min_binary-61, col sep=comma] {rn16_min.csv};

        \addplot table [x=n, y=sr_min-3, col sep=comma] {sr_min-3.csv};

        \addplot table [x=n, y=sr_min-6, col sep=comma] {sr_min-6.csv};

        \addplot table [x=n, y=sr_min-7, col sep=comma] {sr_min-7.csv};

        \addplot table [x=n, y=sr_min-8, col sep=comma] {sr_min-8.csv};

        \addplot table [x=n, y=sr_min-10, col sep=comma] {sr_min-10.csv};

        \nextgroupplot[
        title={$\mathbf{x}_0 = (0.5, 0.5)$},
        xlabel = {$k$},
        cycle list name = list_fig3
        ]

        \addplot table [x=n, y=min_binary-64, col sep=comma] {min-5_binary-64.csv};LIMITED

        \addplot table [x=n, y=min_binary-61, col sep=comma] {rn16_min-5.csv};

        \addplot table [x=n, y=sr_min-3, col sep=comma] {sr-5_min-3.csv};

        \addplot table [x=n, y=sr_min-6, col sep=comma] {sr-5_min-6.csv};

        \addplot table [x=n, y=sr_min-7, col sep=comma] {sr-5_min-7.csv};

        \addplot table [x=n, y=sr_min-8, col sep=comma] {sr-5_min-8.csv};

        \addplot table [x=n, y=sr_min-10, col sep=comma] {sr-5_min-10.csv};

      \end{groupplot}
    \end{tikzpicture}

    \begin{tikzpicture}[trim axis left, trim axis right]
      \begin{axis}[
        title = {},
        legend columns=4,
        scale only axis,
        width=1mm,
        hide axis,
        /tikz/every even column/.append style={column sep=0.6cm},
        legend style={at={(0,0)},anchor=center,draw=none,
          legend cell align={left},cells={line width=\mylinewidth}},
        legend image post style={sharp plot},
        legend cell align={left},
        cycle list name = list_fig3
        ]
        \addplot (0,0);
        \addplot (0,0);
        \addplot (0,0);
        \addplot (0,0);
        \addplot (0,0);
        \addplot (0,0);
        \addplot (0,0);
        \legend{\texttt{binary64} RN, \texttt{binary16} RN, $\SR_{11,3}$, $\SR_{11,6}$, $\SR_{11,7}$, $\SR_{11,8}$, $\SR_{11,10}$};
      \end{axis}
    \end{tikzpicture}
  \end{center}
  \caption{Convergence profiles for 5,000 iterations of gradient descent on the Rosenbrock function. The parameter updates are performed using \texttt{binary64} arithmetic with RN and \texttt{binary16} arithmetic with RN and $\SR_{11,r}$. The starting value for the iteration is $\mathbf{x}_0 = (0, 0)$ for the profiles on the left and $\mathbf{x}_0 = (0.5, 0.5)$ for those on the right. For both experiments, we average each $\SR_{11,r}$ error over $500$ different runs, and the learning rate is $t_k=0.001$.}
  \label{fig:rosenbrock_errors}
\end{figure}
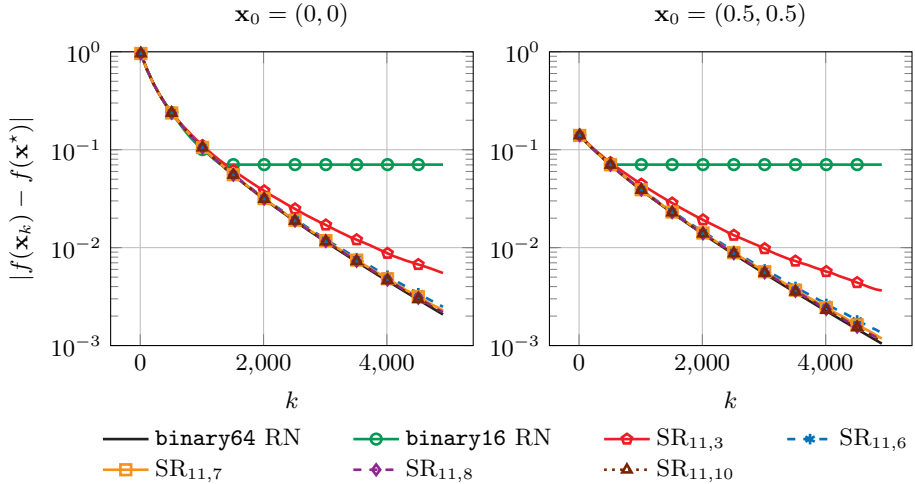

Figure~\ref{fig:rosenbrock_errors} illustrates the effect of $r$ on the convergence of the first 5,000 iterations of gradient descent for two different starting values. In the left panel, the starting value for the iteration is $\mathbf{x}_0 = (0, 0)$, for which $f(\mathbf{x}_0) = 1$ and $\nabla f(\mathbf{x}_0) = (-2, 0)$. This choice of $\mathbf{x}_0$ is close to the narrow, flat valley where the minimum lies, but the point is not too close to the minimum itself, which guarantees the need for a large number of iterations. In \texttt{binary16} arithmetic, RN and $\SR_{11,3}$ are both significantly affected by rounding errors, with RN stagnating rather early on. For larger values of $r$, the SR results are much better, matching those of \texttt{binary64} arithmetic with RN.

In the right panel, the starting value for the iteration is $\mathbf{x}_0 = (0.5,0.5)$, at which $f(\mathbf{x}_0)= 25.25$ and $\nabla f(\mathbf{x}_0)=(-50.5,50)$. This choice of which $\mathbf{x}_0$ lies in a region with higher curvature but closer to the minimum. While all implementations we consider do make progress initially, as soon as the iterates get close to $(1,1)$, \texttt{binary16} RN stagnates, whereas the limited-precision SR alternatives are not affected and continue to progress. As in the previous case, larger values of $r$ recover the baseline \texttt{binary64} convergence profile.

In both cases, as soon as $r$ is close to $\lceil \log_2(5{,}000) / 2\rceil=7$ the accuracy improvement of $\SR_{11,r}$ starts to plateau: the difference between $r=6$ and $r=7$ is somewhat visible, whereas $r=7$ and $r=8$ yield almost identical curves. This indicates that increasing $r$ above 7 leads to diminishing returns.

\subsection{Parameter update in deep neural network training} SR has found two main uses in modern deep learning scenarios: as a quantization procedure for network parameters and signals, in particular gradient signals~\cite{cbhy21,gagn15,wlcs18}, and to avoid stagnation during parameter updates~\cite{wcbc18,zzad20} when low-precision formats (16 and 8 bits) are used to compute and store them. The smaller memory footprint associated with lower precision formats is one of the key factors that has enabled the current boom of large language models.

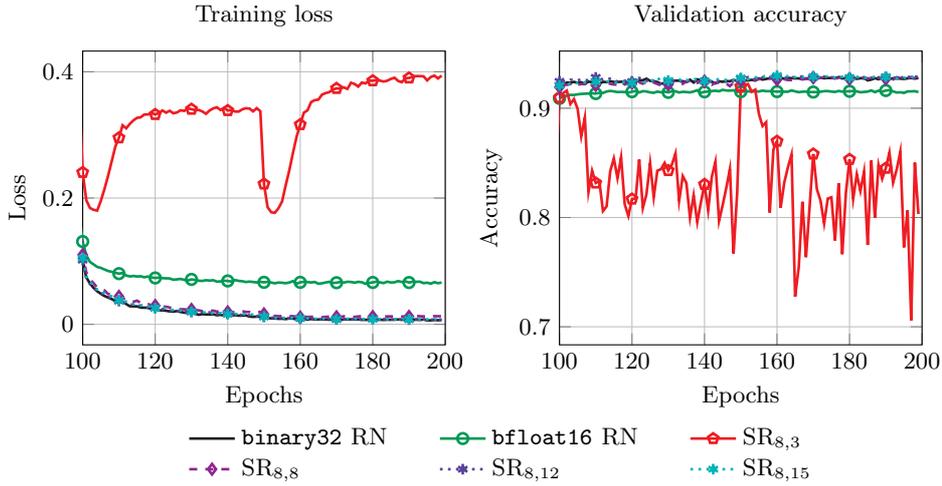
\begin{figure}[t]
  \begin{center}
  \footnotesize
    \begin{tikzpicture}[trim axis group left,
    trim axis group right]
      \begin{groupplot}[
        group style={
          group size=2 by 1,
          horizontal sep=1.5cm,
        },
        width=2.5in,
        grid=major,
        every axis plot/.append style={mark repeat=10, line width=\mylinewidth},
        ]

        \nextgroupplot[
        ylabel={Loss},
        title={Training loss},
        xlabel = {Epochs},
        xmin = 100,
        xmax = 200,
        ylabel near ticks,
        cycle list name = list_fig4
        ]

        \addplot table [x=Step, y=resnet32_binary32 - train_loss, col sep=comma] {resnet32_binary32_train_loss.csv};

        \addplot table [x=Step, y=resnet32_RN - train_loss, col sep=comma] {resnet32_RN_train_loss.csv};

        \addplot table [x=Step, y=resnet32_SR_3 - train_loss, col sep=comma] {resnet32_SR3_train_loss.csv};

        \addplot table [x=Step, y=resnet32_SR_8 - train_loss, col sep=comma] {resnet32_SR8_train_loss.csv};

        \addplot table [x=Step, y=resnet32_SR_12 - train_loss, col sep=comma] {resnet32_SR12_train_loss.csv};

        \addplot table [x=Step, y=resnet32_SR_15 - train_loss, col sep=comma] {resnet32_SR15_train_loss.csv};

        \nextgroupplot[
        title={Validation accuracy},
        xlabel = {Epochs},
        ylabel = {Accuracy},
        xmin = 100,
        xmax = 200,
        ylabel near ticks,
        cycle list name = list_fig4
        ]

        \addplot table [x=Step, y=resnet32_binary32 - test_acc, col sep=comma] {resnet32_binary32_test_acc.csv};

        \addplot table [x=Step, y=resnet32_RN - test_acc, col sep=comma] {resnet32_RN_test_acc.csv};

        \addplot table [x=Step, y=resnet32_SR_3 - test_acc, col sep=comma] {resnet32_SR3_test_acc.csv};

        \addplot table [x=Step, y=resnet32_SR_8 - test_acc, col sep=comma] {resnet32_SR8_test_acc.csv};

        \addplot table [x=Step, y=resnet32_SR_12 - test_acc, col sep=comma] {resnet32_SR12_test_acc.csv};

        \addplot table [x=Step, y=resnet32_SR_15 - test_acc, col sep=comma] {resnet32_SR15_test_acc.csv};

      \end{groupplot}
    \end{tikzpicture}

    \begin{tikzpicture}
      \begin{axis}[
        title = {},
        legend columns=3,
        scale only axis,
        width=1mm,
        hide axis,
        /tikz/every even column/.append style={column sep=0.6cm},
        legend style={at={(0,0)},anchor=center,draw=none,
          legend cell align={left},cells={line width=\mylinewidth}},
        legend image post style={sharp plot},
        legend cell align={left},
        cycle list name = list_fig4
        ]
        \addplot (0,0);
        \addplot (0,0);
        \addplot (0,0);
        \addplot (0,0);
        \addplot (0,0);
        \addplot (0,0);
        \legend{\texttt{binary32} RN, \texttt{bfloat16} RN, $\SR_{8,3}$, $\SR_{8,8}$, $\SR_{8,12}$, $\SR_{8,15}$};
      \end{axis}
    \end{tikzpicture}
  \end{center}
    \caption{Evolution of training loss (left) and validation accuracy (right) for a ResNet32 image classification network on the CIFAR-10 dataset using various parameter and optimizer update configurations. In the baseline configuration,\texttt{binary32} arithmetic with RN is used for compute and the same format is used for storage. For the low-precision configurations, parameters are stored and updated using \texttt{bfloat16} arithmetic with either RN or $\SR_{p,r}$.}
    \label{fig:resnet32_comp}
\end{figure}

Here, we focus on the second case and consider parameter updates during the training of a ResNet32 image classification model~\cite{hzrs16} on the CIFAR-10~\cite{kriz09} dataset. The training hyperparameters are identical to those used in~\cite[sect.~4.2]{hzrs16}: minibatch gradient descent with 128 batch size, momentum set to $0.9$, weight decay to $10^{-4}$, $0.1$ learning rate that gets divided by $10$ after 32,000 and 48,000 iterations, and $n=64{,}000$ iterations ($200$ epochs) of training.
We use \texttt{bfloat16} arithmetic, for which $p = 8$, and the update rule for the network parameters $\mathbf{x}$ in this configuration is
\begin{eqnarray*}
    \mathbf{v}_{k+1} &=& \circ(\mu \mathbf{v}_k + \mathbf{g}_k), \\
    \mathbf{x}_{k+1} &=& \circ(\mathbf{x}_k - t_k\mathbf{v}_{k+1}),
\end{eqnarray*}
where $\mu$ is the momentum term, $\mathbf{v}_k$ is the velocity vector, $\mathbf{g}_k$ is the gradient of the loss function with respect to $\mathbf{x}_k$, and $\circ$ is the rounding operator, which can round to \texttt{bfloat16} using either RN or $\SR_{8,r}$, depending on the configuration.

The results of this experiment are reported in~\Cref{fig:resnet32_comp}. For \texttt{bfloat16}, the training loss is degraded when using RN or $\SR_{8,r}$ with very small $r$, but larger values of $r$ match the baseline \texttt{binary32} results on both the training and the validation datasets: the validation accuracy of \texttt{binary32} is $92.85\%$ but goes down to $91.68\%$ for \texttt{bfloat16} with RN. For \texttt{bfloat16} with $\SR_{p,r}$, the choice $r=3$ leads to an unstable validation accuracy that hovers around $83\%$, but using $\SR_{8,r}$ with $r\geq\lceil \log_2(64{,}000) / 2\rceil=8$ recovers baseline accuracy, with $92.86\%, 92.93\%$, and $92.99\%$ for $r=8$, $r=12$, and $r=15$, respectively. Diminishing improvements are again visible as $r$ is increased beyond this threshold.


\section{Conclusions}
\label{sec:conclusions}

SR has garnered significant attention in various domains~\cite{cfhm22}, as it can deliver improved accuracy compared with the deterministic rounding modes in the IEEE 754 standard~\cite{ieee19}. We have investigated the number of random bits required to implement SR effectively. We introduced a new rounding mode, limited-precision SR, denoted by $\SR_{p,r}$, and we showed that the bias that this rounding mode introduces depends on the value of $r$ used. In \Cref{lem:main-result}, we presented a model that facilitates the theoretical analysis of algorithms using $\SR_{p,r}$. With this lemma, we can analyze all algorithms that were previously studied for classical SR. Applying this lemma to recursive summation and inner product computation, we derived probabilistic error bounds \textcolor{blue}{proportional to $\sqrt{n}u_p + nu_{p+r}$.}

Our findings suggest that $\SR_{p,r}$ becomes less biased as the number of random bits $r$ increases, and that for a large enough value it converges to the theoretical properties of $\SR_p$. On the other hand, using a large value of $r$ may not always be practical or necessary, as the computational overhead associated with generating and processing a large number of random bits can be significant. Therefore, it is crucial to strike a balance between the desired accuracy and the computational resources available when implementing SR in applications. To this end, our bounds suggest that choosing a value of $r$ that is close to $\lceil (\log_2n) / 2\rceil$ offers the best compromise in practice \textcolor{blue}{for summation and dot product operations}. This has been verified through several numerical experiments: recursive summation, gradient descent on the Rosenbrock function, and parameter updates in the training of deep learning models.



\ifx\ispreprint\undefined
\bibliographystyle{siamplain}
\else
\bibliographystyle{plain-doi}
\fi
\bibliography{references}
\end{document}